   \numberwithin{equation}{section}
\newtheorem{thm}{Theorem}[section]
\newtheorem{lem}[thm]{Lemma}
\newtheorem{prop}[thm]{Proposition}
\newtheorem{defn}[thm]{Definition}
\begin{document}
\begin{frontmatter}
\author{Kaihua Bao}
\author{Jian Wang}
\author{Yong Wang\corref{cor2}}
\ead{wangy581@nenu.edu.cn }
\cortext[cor2]{Corresponding author (Yong Wang);}
\address{School of Mathematics and Statistics, Northeast Normal University,
Changchun, 130024, P.R.China}

\title{A local equivariant  index theorem for  sub-signature operators  }
\begin{abstract}
In this paper, we prove a local equivariant index theorem  for  sub-signature operators which generalizes
the Zhang's index theorem for  sub-signature operators.
\end{abstract}
\begin{keyword}
 Sub-signature operator; equivariant index.
\end{keyword}
\end{frontmatter}
\section{Introduction}
The Atiyah-Bott-Segal-Singer index formula is a generalization with group action of the Atiyah-Singer index theorem.
In \cite{BV1}, Berline and Vergne gave a heat
kernel proof of the Atiyah-Bott-Segal-Singer index formula.
In \cite{LYZ}, Lafferty, Yu and Zhang presented a simple and direct geometric proof of the Lefschetz fixed point formula for an
orientation preserving isometry on an even dimensional spin manifold by Clifford asymptotics of heat kernel.
In \cite{PW}, Ponge and Wang gave a different proof of the
equivariant index formula by the Greiner's approach of the heat kernel asymptotics.
In \cite{LM}, in order to prove family rigidity theorems, Liu and Ma proved the equivariant family index formula.
In \cite{Wa2}, Wang gave another proof of the local equivarint
index theorem for a family of Dirac operators by the Greiner's approach of the heat
kernel asymptotics. In \cite{Wa3}, using the Greiner's approach of heat kernel asymptotics, Wang
gave proofs of the equivariant Gauss-Bonnet-Chern formula and the variation
formulas for the equivariant Ray-Singer metric, which are originally due to J. M.
Bismut and W. Zhang.

 In parallel, Freed\cite{DF} considered the case of an orientation reversing involution acting on an odd
dimensional spin manifold and gave the associated Lefschetz formulas by the K-theretical way.  In \cite{Wa1},
Wang constructed an even spectral triple
by the Dirac operator and the orientation-reversing involution and computed the Chern-Connes character for this spectral triple.
In \cite{LW}, Liu and Wang proved an equivariant odd index theorem for Dirac operators with involution parity
and the Atiyah-Hirzebruch vanishing theorems for odd dimensional spin manifolds.

In \cite{Zh} and \cite{Zh1}, Zhang introduced  the sub-signature operators and  proved a local index
formula for these operators. In \cite{MZ} and \cite{DZ}, by computing the adiabatic limit of eta-invariants associated to the so-called
 sub-signature operators, a new proof of the Riemann-Roch-Grothendieck type formula of Bismut-Lott was given.
The motivation of the present article is to prove a local equivariant  index
formula for sub-signature operators.

This paper is organized as follows: In Section 2,  we present some notations of  sub-signature operators.
In Section 3.1, we prove a local equivariant even dimensional   index
formula for sub-signature operators. In Section 3.2, we prove a local equivariant odd dimensional   index
formula for sub-signature operators with the orientation-reversing involution.

\section{The sub-signature operators  }
 Firstly we give the standard setup (also see Section 1 in \cite{Zh}).
Let $M$ be an oriented closed manifold of dimension $n$. Let $E$ be an oriented sub-bundle
of the tangent vector bundle $TM$.
Let $g^{TM}$ be a metric on $TM$. Let $g^{E}$ be the induced metric on $E$. Let $E^{\perp}$ be the
sub-bundle of $TM$ orthogonal to $E$ with respect to $g^{TM}$. Let $g^{E^{\perp}}$ be the metric on $E^{\perp}$
induced from $g^{TM}$. Then $(TM, g^{TM})$ has the following orthogonal splittings
 \begin{eqnarray}
&&TM=E\oplus E^\perp, \\
&&g^{TM}=g^E\oplus g^{E^\perp}.
\end{eqnarray}
 Clearly, $E^{\perp}$ carries a canonically induced orientation. We identify the quotient bundle
$TM/E$ with $E^{\perp}$.

Let $\Omega(M)=\bigoplus_{0}^{n}\Omega^{i}(M)=\bigoplus_{0}^{n}\Gamma(\wedge^{i}(T^{*}M)) $ be the set of smooth sections of
$\wedge(T^{*}M)$. Let $*$ be the Hodge star operator of $g^{TM}$. Then $\Omega(M)$ inherits the following
standardly induced inner product
\begin{equation}
 \langle \alpha, \beta \rangle=\int_{M} \alpha\wedge \overline{*\beta},~~~\alpha, \beta\in \Omega(M).
\end{equation}

We use $g^{TM}$ to identify $TM$ and $T^{*}M$. For any $e\in\Gamma(TM)$, let $e\wedge$ and $i_{e}$ be the standard notation for
exterior and interior multiplications on $\Omega(M)$. Let $c(e)=e\wedge-i_{e}$, $\hat{c}(e)=e\wedge+i_{e}$ be the Clifford actions
on $\Omega(M)$ verifying that
 \begin{eqnarray}
&&c(e)c(e')+c(e')c(e) =-2 \langle e, e' \rangle_{g^{TM}}, \\
&&\hat{c}(e)\hat{c}(e')+\hat{c}(e')\hat{c}(e)=2\langle e, e' \rangle_{g^{TM}}, \\
&& c(e)\hat{c}(e')+\hat{c}(e') c(e)=0.
\end{eqnarray}

Denote $k = {\rm dim} E $. Let $\{f_{1}, \cdots, f_{k}\}$ be an oriented (local) orthonormal basis of $E$.
Set
\begin{equation}
\hat{c}(E, g^{E})= \hat{c}(f_{1}) \cdots \hat{c}(f_{k}),
\end{equation}
where $\hat{c}(E, g^{E})$ does not depend on the choice of the orthonormal basis.

Let
\begin{equation*}
\epsilon={\rm Id}_{\wedge^{even}(T^{*}M)}-{\rm Id}_{\wedge^{odd}(T^{*}M)}
\end{equation*}
be the $Z_{2}$-grading operator of
\begin{equation*}
\wedge(T^{*}M)=\wedge^{even}(T^{*}M)\oplus\wedge^{odd}(T^{*}M).
\end{equation*}

Set
\begin{equation}
\tau(M, g^{E})= \epsilon\hat{c}(E, g^{E}).
\end{equation}
One verifies easily that
\begin{equation}
\tau(M, g^{E})^{2}= (-1)^{\frac{k(k+1)}{2}}.
\end{equation}

Let
\begin{equation*}
\wedge_{\pm}(T^{*}M,g^{E})=\Big\{ \omega\in\wedge^{*}(T^{*}M), \tau(M, g^{E})\omega=\pm\omega \Big\}
\end{equation*}
the (even/odd) eigen-bundles of $\tau(M, g^{E})$  and by $\Omega_{\pm}(M, g^{E})$ ­§the corresponding set of smooth sections.

Let $\delta={\rm d^{*}}$ be the formal adjoint operator of the exterior differential operator ${\rm d}$ on $\Omega(M)$ with respect to the
inner product (2.3).

Set
\begin{equation}
D_{E}=\frac{1}{2}\big( \hat{c}(E, g^{E})({\rm d}+\delta)+(-1)^{k}({\rm d}+\delta)\hat{c}(E, g^{E})  \big).
\end{equation}
Then one verifies easily that
 \begin{eqnarray}
&& D_{E}\tau(M, g^{E})=- \tau(M, g^{E}) D_{E} , \\
&&D^{*}_{E}=(-1)^{\frac{k(k+1)}{2}}D_{E},
\end{eqnarray}
where $D^{*}_{E}$ is the formal adjoint operator of $D_{E}$ with respect to the inner product (2.3).

Set
\begin{equation*}
\tilde{D}_{E}=(\sqrt{-1})^{\frac{k(k+1)}{2}}D_{E}.
\end{equation*}
From (2.11), $\tilde{D}_{E}$ is a formal self-adjoint first order elliptic differential operator on $\Omega(M)$ interchanging
$\Omega_{\pm}(M,g^{E})$.

\begin{defn}
The sub-signature operator $\tilde{D}_{E,+}$ with respect to $(E, g^{TM})$ is the restriction of $\tilde{D}_{E}$ on $\Omega_{+}(M,g^{E})$.
\end{defn}
If we denote the restriction of $\tilde{D}_{E}$ on $\Omega_{-}(M,g^{E})$ by $\tilde{D}_{E,-}$, then one has clearly
\begin{equation*}
\tilde{D}^{*}_{E,\pm}=\tilde{D}_{E,\mp}.
\end{equation*}

Recall that $E$ is the subbundle of $TM$ and that we have the orthogonal decomposition  (2.1) of $TM$ and the metric $g^{TM}$. Let $P^{E}$
(resp. $P^{E^\perp}$) be the orthogonal projection from $TM$ to $E$(resp. $E^\perp$).

Let $\nabla^{TM}$ be the Levi-Civita connection of $g^{TM}$. We will use the same notation for its lifting on $\Omega(M)$.

Set
 \begin{eqnarray}
&& \nabla^{E}=P^{E}\nabla^{TM}P^{E}, \\
&& \nabla^{E^\perp}=P^{E^\perp}\nabla^{TM}P^{E^\perp}.
\end{eqnarray}
Then $\nabla^{E}$(resp.$\nabla^{E^\perp}$) is a Euclidean connection on $E$(resp.$E^\perp$), and we will use  the same notation
for its lifting on $\Omega(E^{*})$(resp. $\Omega(E^{\perp, *})$).

Let $S$ be the tensor defined by
\begin{equation*}
\nabla^{TM}=\nabla^{E}+\nabla^{E^\perp}+S.
\end{equation*}
Then $S$ takes values in skew-adjoint endomorphisms of $TM$, and interchanges $E$ and $E^\perp$.

Let $\{e_{1},\cdots,e_{n}\}$ be an oriented(local) orthonormal  base of $TM$. To specify the role of $E$,
set $\{f_{1},\cdots, f_{k}\}$ be an oriented (local) orthonormal basis of $E$. We will use the greek
subscripts for the basis of $E$. Then by Proposition 1.4 in \cite{Zh}, we have

\begin{prop}
The following identity holds,
\begin{equation}
\tilde{D}_{E}=(\sqrt{-1})^{\frac{k(k+1)}{2}}\big( \hat{c}(E, g^{E})({\rm d}+\delta)
+\frac{1}{2}\sum_{i}c(e_{i})( \nabla_{ e_{i}}^{TM}\hat{c}(E, g^{E}))  \big).
\end{equation}
\end{prop}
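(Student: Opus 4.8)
The goal is to rewrite the operator $\tilde{D}_E = (\sqrt{-1})^{\frac{k(k+1)}{2}} D_E$ with
$$
D_E = \tfrac12\big( \hat{c}(E,g^E)(\mathrm{d}+\delta) + (-1)^k (\mathrm{d}+\delta)\hat{c}(E,g^E) \big)
$$
in a form where the second term is a curvature/connection correction rather than a composition with $\mathrm{d}+\delta$. The natural strategy is to commute $\hat{c}(E,g^E)$ past $\mathrm{d}+\delta$. I would start from the classical fact that $\mathrm{d}+\delta = \sum_i c(e_i)\nabla^{TM}_{e_i}$ acting on $\Omega(M)$, where $\nabla^{TM}$ denotes the lift of the Levi-Civita connection to $\wedge(T^*M)$ (this is the standard Clifford/Dirac presentation of the de Rham operator). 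Then I would compute the "supercommutator" type expression
$$
(-1)^k (\mathrm{d}+\delta)\,\hat{c}(E,g^E) = (-1)^k \sum_i c(e_i)\nabla^{TM}_{e_i}\big(\hat{c}(E,g^E)\,\cdot\,\big),
$$
and apply the Leibniz rule for $\nabla^{TM}$ as a derivation compatible with Clifford multiplication: $\nabla^{TM}_{e_i}(\hat{c}(E,g^E)\omega) = (\nabla^{TM}_{e_i}\hat{c}(E,g^E))\omega + \hat{c}(E,g^E)\nabla^{TM}_{e_i}\omega$.

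The key algebraic point is then a sign/parity count. Since $\hat{c}(E,g^E) = \hat{c}(f_1)\cdots\hat{c}(f_k)$ is a product of $k$ operators each of which anticommutes with every $c(e_i)$ (by relation (2.7), $c(e)\hat{c}(e') + \hat{c}(e')c(e) = 0$), we get $c(e_i)\hat{c}(E,g^E) = (-1)^k \hat{c}(E,g^E) c(e_i)$. Using this to move $\hat{c}(E,g^E)$ to the left through $c(e_i)$ in the term $(-1)^k \sum_i c(e_i)\hat{c}(E,g^E)\nabla^{TM}_{e_i}$ produces exactly $\hat{c}(E,g^E)\sum_i c(e_i)\nabla^{TM}_{e_i} = \hat{c}(E,g^E)(\mathrm{d}+\delta)$, which combines with the first term of $D_E$ to give $\hat{c}(E,g^E)(\mathrm{d}+\delta)$ (the factor $\tfrac12$ and the doubling cancel). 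The remaining term is $(-1)^k \cdot \tfrac12 \sum_i c(e_i)(\nabla^{TM}_{e_i}\hat{c}(E,g^E))$, and one more application of the same parity rule $c(e_i)(\nabla^{TM}_{e_i}\hat{c}(E,g^E))$: since $\nabla^{TM}_{e_i}\hat{c}(E,g^E)$ is again a sum of products of $k$ hatted Clifford elements (differentiating $\hat{c}(f_\alpha)$ gives $\hat{c}(\nabla^{TM}_{e_i}f_\alpha)$, still odd, still $k$ factors in total), it anticommutes with $c(e_i)$ in the same way; tracking the $(-1)^k$ against the $(-1)^k$ already present yields the stated coefficient $+\tfrac12$ with $c(e_i)$ on the left. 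Multiplying through by $(\sqrt{-1})^{\frac{k(k+1)}{2}}$ gives (2.15).

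The main obstacle, and the step requiring genuine care, is the bookkeeping of signs coming from the parity of $\hat{c}(E,g^E)$ (odd when $k$ is odd, even when $k$ is even) interacting with the $(-1)^k$ already built into the definition of $D_E$ and with the sign in the anticommutation relation (2.7). One must also be careful that $\nabla^{TM}_{e_i}$, as a lift to the full exterior algebra, is a derivation that differentiates $\hat c$ and $c$ the same way (the Levi-Civita connection is compatible with the metric, hence with both Clifford structures), so that $\nabla^{TM}_{e_i}\hat{c}(E,g^E)$ is a well-defined endomorphism-valued expression independent of the local frame for $E$ — this independence is already asserted after (2.6) and should be invoked rather than re-proved. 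Everything else is the standard identity $\mathrm{d}+\delta = \sum_i c(e_i)\nabla^{TM}_{e_i}$ plus the Leibniz rule, so once the parity accounting is organized correctly the identity (2.15) follows directly; this is why the proof can be quoted as essentially Proposition 1.4 of \cite{Zh}.
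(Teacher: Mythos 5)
Your overall strategy --- writing $\mathrm{d}+\delta=\sum_i c(e_i)\nabla^{TM}_{e_i}$, applying the Leibniz rule, and anticommuting $c(e_i)$ through the $k$ factors of $\hat c(E,g^{E})$ via $c(e)\hat c(e')=-\hat c(e')c(e)$ --- is the natural one (the paper itself gives no proof, only the citation to Proposition 1.4 of \cite{Zh}), and your computation is correct up to and including the identification of the leftover term as $\frac{(-1)^{k}}{2}\sum_i c(e_i)\big(\nabla^{TM}_{e_i}\hat c(E,g^{E})\big)$. The final step, however, fails. Anticommuting $c(e_i)$ through $\nabla^{TM}_{e_i}\hat c(E,g^{E})$ (which does have parity $k$) gives $(-1)^{k}c(e_i)\big(\nabla^{TM}_{e_i}\hat c(E,g^{E})\big)=\big(\nabla^{TM}_{e_i}\hat c(E,g^{E})\big)c(e_i)$: the sign is absorbed only at the cost of moving $c(e_i)$ to the right. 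It cannot both cancel the $(-1)^{k}$ and leave $c(e_i)$ on the left; for odd $k$ that would say $c(e_i)$ commutes with an odd element. The honest outcome of your argument is
\begin{equation*}
D_{E}=\hat c(E,g^{E})(\mathrm{d}+\delta)+\frac{(-1)^{k}}{2}\sum_i c(e_i)\big(\nabla^{TM}_{e_i}\hat c(E,g^{E})\big),
\end{equation*}
as one checks directly for $k=1$: there $D_{E}=\tfrac12\big(\hat c(f_1)(\mathrm{d}+\delta)-(\mathrm{d}+\delta)\hat c(f_1)\big)=\hat c(f_1)(\mathrm{d}+\delta)-\tfrac12\sum_i c(e_i)\hat c(\nabla^{TM}_{e_i}f_1)$, with a minus sign in front of the correction term.

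So the display in (2.15) agrees with what you actually prove only when $k$ is even. That is the only case the paper uses (the index formulas contain $(\sqrt{-1})^{k/2}$, and Section 3.2 assumes $k$ even explicitly), so the correct way to finish is to invoke that standing parity assumption and set $(-1)^{k}=1$, or else to carry the $(-1)^{k}$ for general $k$. As written, your last sentence manufactures a cancellation that is not there.
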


Similar to Lemma 1.1 in \cite{Zh}, we have

 \begin{lem}
 For any $X\in\Gamma(TM)$,  the following identity holds,
 \begin{equation}
\nabla_{X}^{TM}\hat{c}(E, g^{E})=-\hat{c}(E, g^{E})\sum_{\alpha}\hat{c}(S(X)f_{\alpha})\hat{c}(f_{\alpha}).
\end{equation}
\end{lem}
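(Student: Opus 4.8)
The plan is to compute $\nabla_X^{TM}\hat c(E,g^E)$ directly from the definition $\hat c(E,g^E)=\hat c(f_1)\cdots\hat c(f_k)$ by the Leibniz rule, and then re-express the resulting sum using the tensor $S$. First I would write $\nabla_X^{TM}\big(\hat c(f_1)\cdots\hat c(f_k)\big)=\sum_{\alpha}\hat c(f_1)\cdots\hat c(\nabla_X^{TM}f_\alpha)\cdots\hat c(f_k)$, where the $\alpha$-th factor is replaced. The next step is to decompose $\nabla_X^{TM}f_\alpha$ according to the splitting $TM=E\oplus E^\perp$: its $E$-component is governed by $\nabla^E$, and its $E^\perp$-component is exactly $S(X)f_\alpha$ (since $S$ interchanges $E$ and $E^\perp$, and $\nabla^{TM}=\nabla^E+\nabla^{E^\perp}+S$ applied to a section of $E$ has $E^\perp$-part equal to $S(X)f_\alpha$).

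The key observation is then that the $E$-component contributes nothing. Indeed, $\langle\nabla_X^{TM}f_\alpha,f_\beta\rangle$ is skew-symmetric in $\alpha,\beta$ (because $\{f_\alpha\}$ is orthonormal and $\nabla^{TM}$ is metric), so writing $\nabla_X^{TM}f_\alpha=\sum_\beta\langle\nabla_X^{TM}f_\alpha,f_\beta\rangle f_\beta + S(X)f_\alpha$ and plugging the $E$-part into the Leibniz sum gives $\sum_{\alpha,\beta}\langle\nabla_X^{TM}f_\alpha,f_\beta\rangle\,\hat c(f_1)\cdots\hat c(f_\beta)\cdots\hat c(f_k)$ with $\hat c(f_\beta)$ in the $\alpha$-th slot; by the Clifford relation (2.6) one moves $\hat c(f_\beta)$ to its natural position, and the antisymmetry of the coefficient against the symmetry of the resulting expression forces the total to vanish (this is the standard fact that $\hat c(E,g^E)$ is parallel for the connection $\nabla^E$ on $E$). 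What remains is $\nabla_X^{TM}\hat c(E,g^E)=\sum_\alpha\hat c(f_1)\cdots\hat c(S(X)f_\alpha)\cdots\hat c(f_k)$.

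Finally I would convert the $\alpha$-th-slot insertion back into a clean left-multiplication form. Since $S(X)f_\alpha\in\Gamma(E^\perp)$, the Clifford relation (2.6) gives $\hat c(S(X)f_\alpha)\hat c(f_\beta)=-\hat c(f_\beta)\hat c(S(X)f_\alpha)$ for all $\beta$ (as $\langle S(X)f_\alpha,f_\beta\rangle=0$), so moving $\hat c(S(X)f_\alpha)$ from the $\alpha$-th slot to the far right past the $k-\alpha$ factors $\hat c(f_{\alpha+1}),\dots,\hat c(f_k)$ produces a sign $(-1)^{k-\alpha}$, and similarly reorganizing shows the sum equals $-\hat c(E,g^E)\sum_\alpha\hat c(S(X)f_\alpha)\hat c(f_\alpha)$ after accounting for the bookkeeping of signs (one checks the signs match by comparing, term by term, with $\hat c(f_1)\cdots\hat c(f_k)\hat c(S(X)f_\alpha)\hat c(f_\alpha)$ and using $\hat c(f_\alpha)^2=1$). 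I expect the main obstacle to be precisely this sign/commutation bookkeeping — verifying that the anticommutation of $\hat c(S(X)f_\alpha)$ past the $f_\beta$'s together with the insertion of the compensating $\hat c(f_\alpha)\hat c(f_\alpha)=\mathrm{Id}$ reproduces exactly the claimed factor with the correct overall minus sign; the conceptual content (parallelism of $\hat c(E,g^E)$ under $\nabla^E$, and that only the $S$-part survives) is straightforward.
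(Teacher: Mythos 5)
Your proof is correct, and it is essentially the standard argument: the paper itself gives no proof but refers to Lemma 1.1 of Zhang's paper \cite{Zh}, whose proof is exactly this Leibniz-rule computation (the $\nabla^{E}$-part cancels by skew-symmetry of the connection coefficients together with $\hat c(f_\beta)^2=1$, and the surviving $S$-part is rearranged by anticommuting $\hat c(S(X)f_\alpha)$, which is orthogonal to all $f_\beta$, to the right). The sign bookkeeping you flag does close up: the $(-1)^{k-\alpha}$ from moving $\hat c(S(X)f_\alpha)$ to the right cancels against the $(-1)^{k-\alpha}$ from rewriting $\hat c(f_1)\cdots\widehat{\hat c(f_\alpha)}\cdots\hat c(f_k)$ as $(-1)^{k-\alpha}\hat c(E,g^E)\hat c(f_\alpha)$, and the final anticommutation $\hat c(f_\alpha)\hat c(S(X)f_\alpha)=-\hat c(S(X)f_\alpha)\hat c(f_\alpha)$ produces the overall minus sign.
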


Let $\Delta^{TM}, ~~\Delta^{E}$ be the Bochner Laplacians
 \begin{eqnarray}
&&  \Delta^{TM}=\sum_{i}^{n}(\nabla_{e_{i}}^{TM,2}-\nabla^{TM}_{\nabla_{e_{i}}^{TM}e_{i}} ), \\
&&  \Delta^{E}=\sum_{i}^{k}(\nabla_{e_{i}}^{E,2}-\nabla^{E}_{\nabla_{e_{i}}^{E}e_{i}} ).
\end{eqnarray}

Let $K$ be the scalar curvature of $(M, g^{TM})$. Let $R^{TM},~ R^{E},~R^{E^\perp}$ be the curvatures of
$\nabla^{TM},~\nabla^{E},~\nabla^{E^\perp} $ respectively.
Let $\{h_{1},\cdots,h_{n-k}\}$ be an oriented(local) orthonormal  base of $E^\perp$.
 Now we can state the following Lichnerowicz type formula for $\tilde{D}^{2}_{E}$.
From Theorem 1.1 in \cite{Zh}, we have
\begin{thm}\cite{Zh}
The following identity holds,
 \begin{eqnarray}
&&\tilde{D}^{2}_{E}= - \Delta^{TM}+\frac{K}{4}
   +\frac{1}{8}\sum_{ 1\leq i,j\leq n}\sum_{ 1\leq \alpha,\beta\leq k} \langle R^{E}(e_{i},e_{j})f_{\beta}, f_{\alpha} \rangle
    c(e_{i})c(e_{j})\hat{c}( f_{\alpha})\hat{c}(f_{\beta})  \nonumber\\
&& ~~~~~+\frac{1}{8}\sum_{ 1\leq i,j\leq n}\sum_{ 1\leq s,t\leq n-k} \langle R^{E^\perp}(e_{i},e_{j})h_{t}, h_{s} \rangle
    c(e_{i})c(e_{j})\hat{c}( h_{s})\hat{c}(h_{t}) +\frac{1}{2}\sum_{ \alpha}\hat{c}\big( (\Delta^{TM}-\Delta^{E})f_{\alpha}\big)\hat{c}
    (f_{\alpha})  \nonumber\\
&&~~~~ + \sum_{i, \alpha}\Big(\hat{c}(S(e_{i})f_{\alpha})\hat{c}(f_{\alpha})\nabla^{TM}_{e_{i}}
 - \hat{c}(S(e_{i}) \nabla^{E}_{e_{i}}f_{\alpha})\hat{c}(f_{\alpha})
  + \frac{1}{2}\hat{c}\big( \nabla^{E}_{(\nabla^{TM}_{e_{i}}-\nabla^{E}_{e_{i}})e_{i}} f_{\alpha}\big)\hat{c}(f_{\alpha})
 + \frac{3}{4}\parallel S(e_{i})f_{\alpha})\parallel^{ 2}\Big)\nonumber\\
&&~~~~+ \frac{1}{4}\sum_{ i,\alpha\neq \beta}\hat{c}(S(e_{i}) f_{\alpha})\hat{c}(S(e_{i})f_{\beta})\hat{c}(f_{\alpha})\hat{c}(f_{\beta}).
\end{eqnarray}
\end{thm}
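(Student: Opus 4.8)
The plan is to derive the Lichnerowicz-type formula for $\tilde D_E^2$ by the standard Weitzenböck strategy, starting from the expression in Proposition 2.3 rather than from the original definition (2.10), since the zeroth-order correction term $\frac12\sum_i c(e_i)(\nabla^{TM}_{e_i}\hat c(E,g^E))$ is already isolated there. First I would write $\tilde D_E = (\sqrt{-1})^{\frac{k(k+1)}{2}}\big(\hat c(E,g^E)D + A\big)$ where $D = {\rm d}+\delta$ is the de Rham operator and $A = \frac12\sum_i c(e_i)(\nabla^{TM}_{e_i}\hat c(E,g^E))$, and expand the square as $\tilde D_E^2 = (-1)^{\frac{k(k+1)}{2}}\big(\hat c(E,g^E)D\hat c(E,g^E)D + \hat c(E,g^E)DA + A\hat c(E,g^E)D + A^2\big)$. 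The first summand is the principal piece: using Lemma 2.4 to commute $D$ past $\hat c(E,g^E)$ and the classical Weitzenböck formula $D^2 = -\Delta^{TM,\wedge} + \frac{K}{4} + \frac18\sum_{i,j}c(e_i)c(e_j)\big(\text{full curvature term}\big)$ on $\wedge(T^*M)$, one recovers $-\Delta^{TM}$, the $\frac{K}{4}$ term, and — after splitting the full Riemann curvature endomorphism acting on forms into its $E$-part, $E^\perp$-part and the mixed $S$-contribution — the two curvature sums involving $R^E$ and $R^{E^\perp}$ in (2.21).

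The next step is to handle the cross terms and $A^2$. Here I would repeatedly invoke Lemma 2.4 to rewrite every occurrence of $\nabla^{TM}\hat c(E,g^E)$ as $-\hat c(E,g^E)\sum_\alpha \hat c(S(\cdot)f_\alpha)\hat c(f_\alpha)$, then use the Clifford relations (2.4)–(2.6) together with $\hat c(E,g^E)^2 = (-1)^{\frac{k(k+1)}{2}}$ and the fact that $\hat c(E,g^E)$ anticommutes or commutes with individual $c(e_i)$, $\hat c(f_\alpha)$ according to parity, to push all copies of $\hat c(E,g^E)$ together and cancel them against the overall sign $(-1)^{\frac{k(k+1)}{2}}$. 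The cross terms $\hat c(E,g^E)DA + A\hat c(E,g^E)D$ will, after this bookkeeping, produce the first-order terms $\hat c(S(e_i)f_\alpha)\hat c(f_\alpha)\nabla^{TM}_{e_i}$ and the connection-derivative corrections $-\hat c(S(e_i)\nabla^E_{e_i}f_\alpha)\hat c(f_\alpha)$ and $\frac12\hat c(\nabla^E_{(\nabla^{TM}_{e_i}-\nabla^E_{e_i})e_i}f_\alpha)\hat c(f_\alpha)$ in the fourth line of (2.21); the term $\frac12\sum_\alpha \hat c((\Delta^{TM}-\Delta^E)f_\alpha)\hat c(f_\alpha)$ comes from the second derivative of $\hat c(E,g^E)$ hidden in $D\hat c(E,g^E)D$ once the Bochner Laplacians $\Delta^{TM}$ and $\Delta^E$ from (2.18)–(2.19) are separated. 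Finally $A^2$, after Lemma 2.4, is a purely algebraic Clifford expression in $\hat c(S(e_i)f_\alpha)$; sorting the $\alpha=\beta$ diagonal against the $\alpha\neq\beta$ off-diagonal and using $\hat c(f_\alpha)^2 = 1$ yields the $\frac34\|S(e_i)f_\alpha\|^2$ scalar term and the last double sum $\frac14\sum_{i,\alpha\neq\beta}\hat c(S(e_i)f_\alpha)\hat c(S(e_i)f_\beta)\hat c(f_\alpha)\hat c(f_\beta)$ in (2.21).

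Since this is Theorem 1.1 of \cite{Zh}, I would then simply cite that reference for the verification of the remaining routine Clifford algebra and point out that the computation is identical to Zhang's, the only change being notational. The main obstacle, were one to carry it out in full, is the careful sign and parity accounting when moving $\hat c(E,g^E)$ through the covariant derivatives and Clifford generators — in particular keeping track of how $\nabla^{TM}$ acting on the product $\hat c(E,g^E)\cdot(\text{form})$ distributes via the Leibniz rule and interacts with the substitution from Lemma 2.4 — together with the correct separation of the ambient Bochner Laplacian $\Delta^{TM}$ into the part $\Delta^E$ along $E$ and the residual terms built from the second fundamental form $S$. Everything else is a direct application of the Weitzenböck formula on $\wedge(T^*M)$ and the Clifford identities (2.4)–(2.9).
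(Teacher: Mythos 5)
The paper offers no proof of this theorem at all: it is quoted directly from Theorem 1.1 of \cite{Zh}, so the only argument on record is the citation. Your Weitzenb\"ock-type outline --- squaring the expression of Proposition 2.2, applying the classical formula for $({\rm d}+\delta)^2$ on $\wedge(T^*M)$, using Lemma 2.4 to convert derivatives of $\hat{c}(E,g^{E})$ into the tensor $S$, and splitting the ambient curvature and Bochner Laplacian along $E\oplus E^{\perp}$ --- is precisely the computation Zhang carries out in that reference, and your concluding appeal to \cite{Zh} for the remaining Clifford bookkeeping matches what the paper itself does.
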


\section{A local   equivariant  index Theorem  for sub-signature operators  }
\subsection{A local even dimensional  equivariant  index Theorem  for sub-signature operators  }

Let $M$ be a closed even dimensional $n$ oriented Riemannian manifold and $\phi$ be
an isometry on $M$ preserving the orientation. Then $\phi$ induces a map
$\tilde{\phi}=\phi^{-1,*}:\wedge T_{x}^{*}M\rightarrow \wedge T_{\phi(x)}^{*}M$
on the exterior algebra bundle $\wedge T_{x}^{*}M$. Let $\tilde{D}_{E}$  be the sub-signature operator.
We assume that ${\rm d}\phi$ preserves $E$ and $E^{\perp}$ and their orientations, then $\tilde{\phi}\hat{c}(E,g^{E})=\hat{c}(E,g^{E})\tilde{\phi} $.
 Then $\tilde{\phi}\tilde{D}_{E}=\tilde{D}_{E}\tilde{\phi}$. We will compute the equivariant index
 \begin{equation}
{\rm Ind}_{\phi}(\tilde{D}_{E}^{+})={\rm Tr}(\tilde{\phi}|_{{\rm ker} \tilde{D}_{E}^{+}})-{\rm Tr}(\tilde{\phi}|_{{\rm ker} \tilde{D}_{E}^{-}}).
\end{equation}

We recall the Greiner's approach of heat kernel
asymptotics as in \cite{PW} and \cite{BeGS}, \cite{Gr}. Define
 the operator given by
  \begin{equation}
(Q_0u)(x,s)=\int_{0}^{\infty}e^{-s\tilde{D}_{E}^2}[u(x,t-s)]dt,~~u\in \Gamma_c(M\times {\mathbb{R}},\wedge T^*M),
\end{equation}
  maps continuously $u$ to $D'(M\times {\mathbb{R}},\wedge T^*M))$ which is the dual space of $\Gamma_c(M\times {\mathbb{R}},\wedge T^*M)).$
  We have
   \begin{equation}
(\tilde{D}_{E}^2+\frac{\partial}{\partial t})Q_0u=Q_0(\tilde{D}_{E}^2+\frac{\partial}{\partial t})u=u,~~~u\in \Gamma_c(M\times {\mathbb{R}},\wedge T^*M)).
\end{equation}

Let $(\tilde{D}_{E}^2+\frac{\partial}{\partial t})^{-1}$ is the Volterra inverse of $\tilde{D}_{E}^2+\frac{\partial}{\partial t}$ as in \cite{BeGS}. Then
 \begin{equation}
(\tilde{D}_{E}^2+\frac{\partial}{\partial t})Q=I-R_1;~~Q(\tilde{D}_{E}^2+\frac{\partial}{\partial t})=1-R_2,
\end{equation}
where $R_1,R_2$ are smooth operators. Let
 \begin{equation}
(Q_0u)(x,t)=\int_{M\times {\mathbb{R}}}K_{Q_0}(x,y,t-s)u(y,s)dyds,
\end{equation}
and $k_t(x,y)$ is the heat kernel of $e^{-t\tilde{D}_{E}^2}$. We get
 \begin{equation}
K_{Q_0}(x,y,t)=k_t(x,y)~ {\rm when}~ t>0,~~ {\rm when }~ t<0,~ K_{Q_0}(x,y,t)=0.
\end{equation}

 \begin{defn}
The operator $P$ is called the Volterra $\Psi DO$ if

 (i) $P$ has the Volterra property,i.e. it has a distribution
kernel of the form $K_P(x,y, t-s)$ where $K_P(x, y, t)$ vanishes on the region $t<0.$

(ii) The parabolic homogeneity of the heat operator $P +
\frac{\partial}{\partial t}$, i.e. the homogeneity with respect to
the dilations of ${\mathbb{R}}^n\times{\mathbb{R}} ^1$ given by
 \begin{equation}
\lambda\cdot (\xi,\tau)=(\lambda\xi,\lambda^2\tau),~~~~~~~(\xi,\tau)\in {\mathbb{R}}^n\times {\mathbb{R}}^1,~~\lambda\neq 0.
\end{equation}
\end{defn}

In the sequel for $g\in  \textsl{S} ({\mathbb{R}}^{n+1})$
and $\lambda\neq 0$, we let $g_{\lambda}$ be the tempered
distribution defined by
 \begin{equation}
\left<g_\lambda(\xi,\tau),u(\xi,\tau)\right>=|\lambda|^{-(n+2)}\left<g_\lambda(\xi,\tau),u(\lambda^{-1}\xi,\lambda^{-2}\tau)\right>,~~u\in
{\textsl{S}} ({\mathbb{R}}^{n+1}).
\end{equation}

 \begin{defn}
A distribution $ g\in  \textsl{S}
({\mathbb{R}}^{n+1})$ is parabolic homogeneous of degree $m,~ m \in
Z,$ if for
any $\lambda\neq 0$, we have $g_\lambda = \lambda^m g.$
\end{defn}

 Let ${\mathbb{C}}_-$ denote the complex halfplane $\{{\rm Im}\tau < 0\}$ with closure $\overline{{\mathbb{C}}_-}$. Then:

 \begin{lem}\cite{BeGS}
Let $q(\xi, \tau)\in C^{\infty}(({\mathbb{R}}^n\times {\mathbb{R}})/0)$ be a
parabolic homogeneous symbol
of degree $m$ such that:

(i) ~ $q$ extends to a continuous function on $(
{\mathbb{R}}^n\times \overline{{\mathbb{C}}_-})/0 $ in such way to
be holomorphic in the
last variable when the latter is restricted to ${{\mathbb{C}}}_-$.

 Then there is a unique $g\in  {\textsl{S}} ({\mathbb{R}}^{n+1})$ agreeing with q on ${\mathbb{R}}^{n+1}/0$ so that:

(ii)  $g$ is homogeneous of degree $m$;

(iii) The inverse Fourier transform $\breve{g}(x, t)$ vanishes for $t < 0.$
\end{lem}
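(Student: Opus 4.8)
The plan is to split the statement into an existence construction and a uniqueness argument, both of which will in the end reduce to one elementary fact: a distribution supported at the origin has a \emph{polynomial} inverse Fourier transform, and a nonzero polynomial cannot vanish on the open half-space $\{t<0\}$. The bookkeeping device for the construction is the auxiliary symbol $\mu(\xi,\tau)=|\xi|^{2}+i\tau$, which is parabolic homogeneous of degree $2$, vanishes only at the origin, and satisfies $\mathrm{Re}\,\mu=|\xi|^{2}-\mathrm{Im}\,\tau>0$ on $\mathbb{R}^{n}\times\mathbb{C}_{-}$. Hence for every $z\in\mathbb{C}$ the power $\mu^{z}$ is well defined and single valued, holomorphic in $\tau\in\mathbb{C}_{-}$, continuous on $\mathbb{R}^{n}\times\overline{\mathbb{C}_{-}}$ away from the origin, and parabolic homogeneous of degree $2z$. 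Consequently $g_{z}:=\mu^{z}q$ inherits all of these properties, is parabolic homogeneous of degree $m+2z$, and coincides with $q$ on $\mathbb{R}^{n+1}\setminus 0$ when $z=0$.

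\emph{Uniqueness.} If $g_{1},g_{2}$ both satisfy (ii)--(iii) and agree with $q$ off the origin, then $h:=g_{1}-g_{2}$ is supported at $\{0\}$, so $h=\sum_{|\alpha'|+2j=d}c_{\alpha',j}\,\partial_{\xi}^{\alpha'}\partial_{\tau}^{j}\delta$ with $d:=-(n+2)-m$ forced by parabolic homogeneity of degree $m$ (in particular $h=0$ automatically when $d<0$). Then $\breve h(x,t)$ is a constant multiple of the polynomial $\sum c_{\alpha',j}\,x^{\alpha'}t^{j}$; since it vanishes on the open set $\{t<0\}$ it vanishes identically, all $c_{\alpha',j}=0$, and $g_{1}=g_{2}$.

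\emph{Existence.} For $\mathrm{Re}\,z$ large enough that $m+2\,\mathrm{Re}\,z>-(n+2)$, the function $g_{z}=\mu^{z}q$ is locally integrable near the origin by parabolic homogeneity, hence is a tempered distribution, and $z\mapsto g_{z}$ is holomorphic with values in $\mathcal{S}'(\mathbb{R}^{n+1})$; by the standard theory of homogeneous distributions it continues meromorphically in $z$ with at most simple poles, located where $m+2z$ is a critical degree $-(n+2),-(n+3),\dots$, and off the poles $g_{z}$ stays parabolic homogeneous of degree $m+2z$. I claim $\breve{g_{z}}$ vanishes for $t<0$ wherever it is regular. For $\mathrm{Re}\,z$ large one checks this by a contour shift: pairing $\breve{g_{z}}$ with a test function $w$ supported in $\{t<0\}$ produces $\int_{\mathbb{R}^{n}}\big(\int_{\mathbb{R}}g_{z}(\xi,\tau)\,\breve w(\xi,\tau)\,d\tau\big)\,d\xi$; for fixed $\xi\neq0$ both $g_{z}(\xi,\cdot)$ and $\breve w(\xi,\cdot)$ are holomorphic in $\mathbb{C}_{-}$, the first of polynomial growth and the second rapidly decreasing as $\mathrm{Im}\,\tau\to-\infty$ because $w$ is supported where $t<0$, so Cauchy's theorem lets one push the $\tau$-contour down to $\{\mathrm{Im}\,\tau=-R\}$ and let $R\to\infty$, giving $0$. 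Since the Fourier transform is continuous on $\mathcal{S}'$, the vanishing of $\breve{g_{z}}$ on $\{t<0\}$ persists under analytic continuation in $z$. Finally, if $z=0$ were a pole, its residue $g_{-1}=\lim_{z\to0}z\,g_{z}$ would be a parabolic homogeneous distribution of degree $m$ supported at the origin, so $\breve{g_{-1}}$ would be a polynomial; but $\breve{g_{-1}}=\lim_{z\to0}z\,\breve{g_{z}}$ still vanishes on $\{t<0\}$, so $\breve{g_{-1}}=0$ and $g_{-1}=0$. Hence $z=0$ is regular, and $g:=g_{0}$ is a tempered distribution, parabolic homogeneous of degree $m$, equal to $q$ on $\mathbb{R}^{n+1}\setminus 0$, with $\breve g$ supported in $\{t\ge 0\}$ — exactly (ii) and (iii).

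\emph{Main obstacle.} The conceptual kernel is the half-space rigidity argument, used twice in the same way; the real work lies in the analytic machinery surrounding it: establishing the meromorphic continuation of $z\mapsto g_{z}$ with only simple poles and controlling the nature of its residues, and making the contour-shift computation rigorous as an identity of tempered distributions. The latter needs uniform polynomial bounds on $g_{z}(\xi,\tau)$ and on its boundary values along $\overline{\mathbb{C}_{-}}$, valid up to the origin, together with the rapid decay of $\breve w$ along horizontal lines that kills the vertical sides of the contour rectangle. These estimates are routine but must be assembled with care; the rest is bookkeeping.
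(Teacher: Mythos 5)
This lemma is not proved in the paper at all: it is imported verbatim from \cite{BeGS} and used as a black box, so there is no in-paper argument to compare against and your proposal has to be judged on its own terms against the standard proof in the cited source. Your proof is correct. The uniqueness half --- a parabolic homogeneous tempered distribution supported at the origin is a combination of $\partial_\xi^{\alpha'}\partial_\tau^{j}\delta$ with $|\alpha'|+2j=-(n+2)-m$, its inverse Fourier transform is a polynomial, and a nonzero polynomial cannot vanish on the open half-space $\{t<0\}$ --- is exactly the standard argument, and you correctly reuse the same rigidity to kill a possible residue at $z=0$. Your existence argument via the Riesz-type family $g_z=(|\xi|^2+i\tau)^{z}q$, its meromorphic continuation with simple poles only at the critical parabolic degrees $m+2z=-(n+2)-j$ and residues supported at the origin, together with the persistence under continuation of the vanishing of $\breve g_z$ on $\{t<0\}$ (established for $\mathrm{Re}\,z\gg 0$ by the contour shift into $\mathbb{C}_-$), is a genuine alternative to the usual degree-raising reduction (antidifferentiate $q$ in $\tau$ within $\overline{{\mathbb{C}}_-}$ until the degree exceeds $-(n+2)$, extend the then locally integrable symbol, and differentiate the extension). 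What your route buys is uniformity: one formula for all $m$, with the critical degrees appearing only as potential poles that the half-space rigidity removes. What it costs is that the meromorphic continuation must actually be established; as you note, this is the main outstanding work, and it follows from parabolic polar coordinates --- writing $\langle g_z,u\rangle=\int_0^\infty r^{\,m+2z+n+1}U(r)\,dr$ with $U$ smooth and rapidly decreasing --- together with the uniform bound $|q(\xi,\tau)|\le C(|\xi|+|\tau|^{1/2})^{m}$ on $({\mathbb{R}}^n\times\overline{{\mathbb{C}}_-})\setminus 0$, which comes from homogeneity of the holomorphic extension (valid on $\overline{{\mathbb{C}}_-}$ by the identity theorem) and compactness of the parabolic sphere. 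I find no gap.
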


 Let $U$ be an open subset of ${\mathbb{R}}^n$. We define Volterra symbols and Volterra $\Psi DO$¡¯s on $U\times {\mathbb{R}}^{n+1}/0$
as follows.

 \begin{defn}
$S_V^m(U\times
{\mathbb{R}}^{n+1}),~m\in{\mathbb{Z}}$ ,
consists in smooth functions $q(x, \xi, \tau)$ on $U\times
{\mathbb{R}}^{n}\times {\mathbb{R}}$ with
an asymptotic expansion $q\sim \sum_{j\geq 0}q_{m-j},$ where:

(i)  $q_l\in C^{\infty}(U\times [({\mathbb{R}}^{n}\times
{\mathbb{R}})/0]$ is a homogeneous Volterra symbol of degree $l$,
i.e. $q_l$ is parabolic homogeneous of degree $l$ and satisfies the
property (i) in Lemma 2.3 with respect to the last $n + 1$
variables;

(ii) The sign $\sim$ means that, for any integer $N$ and any compact $K,~ U,$ there is a constant
$C_{NK\alpha\beta k}>0$ such that for $x\in K$ and for $|\xi|+|\tau|^{\frac{1}{2}}>1$ we have
 \begin{equation}
|\partial_x^\alpha\partial_\xi^\beta\partial_\tau^k(q-\sum_{j<N}q_{m-j})(x,\xi,\tau)|\leq
C_{NK\alpha\beta k}(|\xi|+|\tau|^{\frac{1}{2}})^{m-N-|\beta|-2k}.
\end{equation}
\end{defn}

 \begin{defn}
 $\Psi_V^m(U\times
{\mathbb{R}}), ~m\in{\mathbb{Z}}$ , consists in
continuous operators $Q$ from $C_c^{\infty}(U_x\times
{\mathbb{R}}_t)$ to $C^{\infty}(U_x\times
{\mathbb{R}}_t)$
such that:

(i) $ Q $ has the Volterra property;

(ii) $Q = q(x,D_x,D_t) + R$ for some symbol $q$ in
$S_V^m(U\times {\mathbb{R}})$
and some smooth operator $R$.
\end{defn}

 In the sequel if $Q$ is a Volterra $\Psi DO$, we let $K_Q(x, y, t-s)$ denote its distribution kernel, so that
the distribution $K_Q(x, y, t)$ vanishes for $t< 0$.

 \begin{defn}
 Let $q_m(x, \xi, \tau)\in
C^{\infty}(U\times ({\mathbb{R}}^{n+1}/0))$ be a
homogeneous Volterra symbol of order $m$ and let $g_m \in
C^{\infty}(U)\otimes {\mathbb{S}}'({\mathbb{R}}^{n+1})$ denote its unique homogeneous extension given by Lemma 2.3.
Then:

(i) $\breve{q}_m(x, y, t)$ is the inverse Fourier transform of $g_m(x, \xi, \tau )$ in the last $n + 1$ variables;

(ii) $q_m(x,D_x,D_t)$ is the operator with kernel $\breve{q}_m(x, y-x, t).$
\end{defn}

 \begin{defn}
 The following properties hold.

1) Composition. Let $Q_j\in \Psi_V^{m_j}(U\times {\mathbb{R}}), ~j=1,2$
have symbol $q_j$ and suppose that $Q_1$ or $Q_2$ is
properly supported. Then $Q_1Q_2$ is a Volterra $\Psi DO$ of order $m_1+m_2$ with symbol $q_1\circ q_2\sim \sum\frac{1}{\alpha!}
\partial^\alpha_\xi
q_1D^\alpha_xq_2.$
2)  Parametrices. An operator $Q$ is the order $m$ Volterra
$\Psi DO$ with the paramatrix $P$ then
$$QP=1-R_1,~~~PQ=1-R_2\eqno(2.10)$$
where $R_1,~R_2$ are smooth operators.
\end{defn}

 \begin{defn}
 The differential operator $\tilde{D}_{E}^2 +
\partial_t$ is invertible and
its inverse $(\tilde{D}_{E}^2 + \partial_t)^{-1}$ is a Volterra $\Psi DO$ of order $-2$
\end{defn}

  We denote by $M^\phi$ the fixed-point set of $\phi$, and for $a = 0,\cdots ,n,$ we let
   $M^\phi=\bigcup _{0\leq a\leq n} M_{a}^\phi$, where  $M_{a}^\phi$ is an $a$-dimensional submanifold. Given a fixed-point $x_0$ in a component
   $M_{a}^\phi$, consider some local coordinates $x = (x^1,\cdots , x^a)$ around
$x_0.$ Setting $b = n-a,$ we may further assume that over the range
of the domain of the local coordinates there is an orthonormal frame
$e_1(x),\cdots , e_b(x)$ of $N^\phi_z$. This defines fiber
coordinates $v = (v_1, \cdots , v_b).$ Composing with the map
$(x,v)\in N^\phi(\varepsilon_0)\rightarrow {\rm exp}_x(v)$ we then
get local coordinates $x^1,\cdots,x^a,v^1,\cdots,v^b$ for $M$ near
the fixed point $x_0$. We shall refer to this type of coordinates as
{\it tubular coordinates.} Then $N^\phi(\varepsilon_0)$ is homeomorphic with a tubular neighborhood of $M^\phi$.
 Set $i_{M^{\phi}}:M^{\phi} \hookrightarrow M$ be an inclusion map. Since ${\rm d}\phi$ preserves $E$ and $E^{\perp}$,
considering  the oriented (local) orthonormal basis $\{f_{1}, \cdots, f_{k}, h_{1},\cdots,h_{n-k}\}$, set
 \begin{equation}
{\rm d}\phi_{x_{0}}=\begin{pmatrix}
  {\rm exp}(L_{1}) & 0\\
0&  {\rm exp}(L_{2})
 \end{pmatrix},
\end{equation}
where $L_{1}\in \mathfrak{s}o(k) $ and $L_{2}\in \mathfrak{s}o(n-k)$

 Let
\begin{equation}
\widehat{A}(R^{M^{\phi}})={\rm
det}^{\frac{1}{2}}\left(\frac{R^{M^{\phi}}/4\pi}{{\rm
sinh}(R^{M^{\phi}}/4\pi)}\right);~~\nu_{\phi}(R^{N^{\phi}}):={\rm
det}^{-\frac{1}{2}}(1-{\phi}^Ne^{-\frac{R^{N^{\phi}}}{2\pi}}).
\end{equation}
By (3.38) and Lemma 2.15 (2),  and Lemma 9.13 in \cite{PW}, we
get the main Theorem in this section.
\begin{thm}( Local even dimensional equivariant  index Theorem  for sub-signature operators )

Let $x_0\in M^\phi$, then
\begin{eqnarray}
 \lim_{t\rightarrow 0}{\rm Str}\left[\tilde{\phi}(x_0)K_{t}(x_0,\phi(x_0))\right]
&=& (\frac{1}{\sqrt{-1}})^{\frac{k}{2}} 2^{\frac{n}{2}}\Big\{\widehat{A}(R^{M^{\phi}})\nu_{\phi}(R^{N^{\phi}})
 i^{*}_{M^{\phi}}\Big[{\rm det}^{\frac{1}{2} } \Big({\rm cosh}\big( \frac{R^{E}}{4\pi}-\frac{L_{1}}{2}\big) \Big)\nonumber\\
&&\times {\rm det}^{\frac{1}{2} } \Big( \frac{ {\rm sinh}
(\frac{R^{E^{\perp}}}{4\pi}-\frac{L_{2}}{2})}{\frac{R^{E^{\perp}}}{4\pi}-\frac{L_{2}}{2}}\Big)  {\rm Pf} \Big(\frac{R^{E^{\perp}}}{4\pi}-\frac{L_{2}}{2}\Big)\Big]\Big\}
^{(a,0)}(x_{0}).
\end{eqnarray}
\end{thm}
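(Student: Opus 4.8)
The plan is to compute the limit as $t\to 0$ of the localized supertrace $\mathrm{Str}[\tilde\phi(x_0)K_t(x_0,\phi(x_0))]$ by combining the Volterra $\Psi DO$ / Greiner approach to heat kernel asymptotics (Definitions 2.2--2.10 above) with the Lichnerowicz-type formula of Theorem 2.5 for $\tilde D_E^2$. First I would rescale near a fixed point $x_0\in M_a^\phi$ in tubular coordinates $(x^1,\dots,x^a,v^1,\dots,v^b)$, using the Getzler rescaling adapted to the two Clifford actions $c(\cdot)$ and $\hat c(\cdot)$ on $\wedge T^*M$: the $c(e_i)$ are rescaled as in the classical signature computation, while the $\hat c(f_\alpha)$ attached to the subbundle $E$ and the $\hat c(h_s)$ attached to $E^\perp$ are kept as the two "hatted" Clifford variables, since $\tilde D_E^2$ in (2.22) contains $R^E$ coupled to $\hat c(f_\alpha)\hat c(f_\beta)$ and $R^{E^\perp}$ coupled to $\hat c(h_s)\hat c(h_t)$. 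The point of the Greiner/Volterra formalism is that the heat kernel $k_t(x_0,\phi(x_0))$ off the diagonal at a fixed point is recovered from the symbol of $(\tilde D_E^2+\partial_t)^{-1}$ (Definition 2.10), and after the parabolic rescaling $\xi\to\sqrt t\,\xi$, $\tau\to t\tau$, $v\to\sqrt t\, v$ the leading symbol becomes that of a harmonic-oscillator-type operator — a generalized Mehler formula — twisted by the linearizations $L_1,L_2$ of $\mathrm d\phi$ on $N^\phi=E\big|_{N^\phi}\oplus E^\perp\big|_{N^\phi}$.

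The key steps, in order, are: (1) write $K_t(x_0,\phi(x_0))$ via the Volterra inverse and reduce, modulo terms that vanish as $t\to 0$, to the model operator obtained by freezing coefficients of (2.22) at $x_0$ and keeping only the quadratic part in the normal directions $v$; (2) organize the model operator as a sum of commuting pieces — a harmonic oscillator in the normal variable $v$ with frequency matrix built from $R^{M^\phi}$ (giving the $\widehat A(R^{M^\phi})$ factor), a finite-dimensional Clifford/rotation piece from the normal bundle with the $\phi^N e^{-R^{N^\phi}/2\pi}$ twist (giving $\nu_\phi(R^{N^\phi})$), and the two "hatted" curvature terms $\frac18\langle R^E(e_i,e_j)f_\beta,f_\alpha\rangle c(e_i)c(e_j)\hat c(f_\alpha)\hat c(f_\beta)$ and the analogous $R^{E^\perp}$ term; (3) apply the Mehler formula to the oscillator part and evaluate the relevant supertrace over $\wedge T^*M$, using $\tau(M,g^E)=\epsilon\,\hat c(E,g^E)$ to convert $\mathrm{Str}$ into the appropriate combination of a Berezin integral in the $c$-variables (producing a Pfaffian/Euler-type factor on $E^\perp$, hence $\mathrm{Pf}(\tfrac{R^{E^\perp}}{4\pi}-\tfrac{L_2}{2})$) and traces in the $\hat c(f_\alpha)$ and $\hat c(h_s)$ variables; (4) the $\hat c(f_\alpha)$-trace of $\exp\big(\text{quadratic in }R^E,L_1\big)$ yields $\mathrm{det}^{1/2}\cosh(\tfrac{R^E}{4\pi}-\tfrac{L_1}{2})$, and the $\hat c(h_s)$-trace together with the $E^\perp$ oscillator contributes $\mathrm{det}^{1/2}\big(\tfrac{\sinh(\cdot)}{(\cdot)}\big)\,\mathrm{Pf}(\cdot)$ with argument $\tfrac{R^{E^\perp}}{4\pi}-\tfrac{L_2}{2}$; (5) collect the normalization constants: the $(\sqrt{-1})^{-k(k+1)/2}$ from $\tilde D_E=(\sqrt{-1})^{k(k+1)/2}D_E$ combines with the $\hat c(E,g^E)=\hat c(f_1)\cdots\hat c(f_k)$ factor and the Berezin normalization to give the overall $(1/\sqrt{-1})^{k/2}2^{n/2}$, and finally restrict to the $(a,0)$-component via $i^*_{M^\phi}$ as in Lemma 9.13 of \cite{PW}.

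The main obstacle I expect is controlling the numerous lower-order and cross terms in the Lichnerowicz formula (2.22) — in particular the first-order terms $\hat c(S(e_i)f_\alpha)\hat c(f_\alpha)\nabla^{TM}_{e_i}$, the terms involving $\Delta^{TM}-\Delta^E$, and the quartic term $\tfrac14\sum_{i,\alpha\ne\beta}\hat c(S(e_i)f_\alpha)\hat c(S(e_i)f_\beta)\hat c(f_\alpha)\hat c(f_\beta)$ — and showing that under the Getzler rescaling they either drop out in the $t\to 0$ limit or are precisely what is needed to complete the curvature expressions $\tfrac{R^{E}}{4\pi}$, $\tfrac{R^{E^\perp}}{4\pi}$ appearing in the answer (one uses here that the second fundamental form $S$ contributes to the difference between $R^{TM}|_E$, $R^{E}$, and similarly on $E^\perp$). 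A secondary subtlety is the bookkeeping of the two independent "hat" Clifford algebras generated by $\{\hat c(f_\alpha)\}$ and $\{\hat c(h_s)\}$ when computing the supertrace twisted by $\tilde\phi(x_0)$, since $\tilde\phi$ acts on $\wedge T^*M$ through $\wedge\mathrm d\phi^{-1,*}$ and must be matched correctly against $L_1$ on the $E$-factor and $L_2$ on the $E^\perp$-factor; this is exactly where the asymmetry $\cosh$ versus $\sinh/(\cdot)\cdot\mathrm{Pf}$ between the $E$ and $E^\perp$ contributions arises, and it is governed by whether the corresponding "hat" variables pair with an even or odd power through $\tau(M,g^E)$. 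I would handle this by treating $\wedge T^*M\cong \wedge E^*\,\widehat\otimes\,\wedge E^{\perp,*}$ and computing the two Clifford traces separately, then invoking equation (3.38) and Lemma 2.15(2) to assemble the final formula.
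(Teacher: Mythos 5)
Your strategy is essentially the one the paper follows: Greiner/Volterra heat-kernel asymptotics, Getzler rescaling with ${\rm deg}\,c(e_j)=1$, ${\rm deg}\,\hat c(e_j)=0$, reduction to the model harmonic oscillator $F_{(2)}$ of Proposition~3.14, Mehler formula (Lemma~3.15), and a final Berezin/Clifford-trace computation splitting the $\hat c$-algebra into the $\{\hat c(f_\alpha)\}$ and $\{\hat c(h_s)\}$ factors (Lemma~3.19), with $\tau(M,g^E)=\epsilon\hat c(E,g^E)$ converting the twisted supertrace. Your steps (1)--(5) track the paper's Lemmas~3.12, 3.15, 3.16 and 3.19 closely, and you correctly anticipate the main technical burden — showing the lower-order and $S$-terms in the Lichnerowicz formula drop out or complete $R^E,R^{E^\perp}$ under rescaling, which the paper disposes of by the Getzler-order bookkeeping of Proposition~3.14. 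One small imprecision: $L_1\in\mathfrak{so}(k)$ and $L_2\in\mathfrak{so}(n-k)$ in (3.10) are logarithms of $d\phi_{x_0}$ on all of $E$ and $E^\perp$ respectively (vanishing on $TM^\phi$), not linearizations defined only on $N^\phi$; the oscillator frequency and the $\nu_\phi(R^{N^\phi})$ factor come from the $TM^\phi\oplus N^\phi$ splitting, which is a different decomposition of $TM$ than $E\oplus E^\perp$. This does not affect the outcome but should not be conflated when matching $\tilde\phi$ against the two $\hat c$-factors in (3.32).
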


Next we give a detailed proof of Theorem 3.9.

 Let $Q=( \tilde{D}_{E}^2+\partial_t)^{-1}$. For $x\in M^\phi$ and
$t>0$ set
 \begin{equation}
I_Q(x,t):=\widetilde{\phi}(x)^{-1}\int_{N_x^\phi(\varepsilon)}\phi({\rm
exp}_xv)K_Q({\rm exp}_xv,{\rm exp}_x(\phi'(x)v),t)dv.
\end{equation}
Here we use the trivialization of $\wedge(T^*M)$ about the tubular
coordinates. Using the tubular coordinates, then
 \begin{equation}
I_Q(x,t)=\int_{|v|<\varepsilon}\widetilde{\phi}(x,0)^{-1}\widetilde{\phi}(x,v)K_Q(x,v;x,\phi'(x)v;t)dv.
\end{equation}
Let
 \begin{equation}
q^{\wedge(T^*M)}_{m-j}(x,v;\xi,\nu;\tau):=\widetilde{\phi}(x,0)^{-1}\widetilde{\phi}(x,v)q_{m-j}(x,v;\xi,\nu;\tau).
\end{equation}

Recall
 \begin{prop}\cite{PW}
 Let $Q\in
\Psi_V^m(M\times {\mathbb{R}},\wedge(T^*M)),~m\in {\mathbb{Z}}.$ Uniformly on
each component $M_{a}^\phi$
 \begin{equation}
I_Q(x,t)\sim \sum_{j \geq
0}t^{-(\frac{a}{2}+[\frac{m}{2}]+1)}I_Q^j(x) ~~~~~~{\rm
as}~~t\rightarrow 0^+,
\end{equation}
where $I_Q^j(x)$ is defined by
 \begin{equation}
I_Q^{(j)}(x):=\sum_{|\alpha|\leq
m-[\frac{m}{2}]+2j}\int\frac{v^\alpha}{\alpha!}\left(\partial_v^\alpha
q^{\wedge(T^*M)}_{2[\frac{m}{2}]-2j+|\alpha|}\right)^\vee(x,0;0,(1-\phi'(x))v;1)dv.
\end{equation}
\end{prop}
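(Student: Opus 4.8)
The plan is to prove this asymptotic expansion by the Volterra symbol calculus of \cite{BeGS} and \cite{PW}, combined with the parabolic rescaling $v\mapsto\sqrt{t}\,v$. First, any $Q\in\Psi_V^m(M\times\mathbb{R},\wedge(T^*M))$ can be written $Q=q(x,D_x,D_t)+R$ with $q$ a Volterra symbol of order $m$ and $R$ a smoothing Volterra operator; since $K_R(x,y,t-s)$ is $C^\infty$ everywhere and vanishes for $t<0$, it vanishes to infinite order at $t=0$, so its contribution $I_R(x,t)$ is $O(t^\infty)$ uniformly on compacta and may be discarded. For the principal part one uses that, in the tubular coordinates $(x,v)$, the isometry $\phi$ acts exactly by $(x,v)\mapsto(x,\phi'(x)v)$, because $M^\phi$ is totally geodesic and $\exp$ is $\phi$-equivariant; hence, writing $(\,\cdot\,)^{\vee}$ for the inverse Fourier transform in the $n+1$ cotangent--time variables (as in the definition of $\breve q_m$) and $q^{\wedge(T^*M)}=\widetilde{\phi}(x,0)^{-1}\widetilde{\phi}(x,v)\,q$,
\[
I_Q(x,t)=\int_{|v|<\e}\bigl(q^{\wedge(T^*M)}\bigr)^{\vee}\bigl(x,v;\,0,(1-\phi'(x))v;\,t\bigr)\,dv,
\]
the Jacobian of the tubular chart being $1+O(|v|)$ and absorbed into the Taylor expansion below.

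Next I would insert the symbol expansion $q\sim\sum_{l\le m}q_l$ and treat the homogeneous pieces one at a time, the symbol remainder $q-\sum_{l>m-N}q_l$ being controlled, after the rescaling, by the quantitative estimates in the definition of $S_V^m$, so that it contributes $O(t^{N'})$ with $N'\to\infty$ as $N\to\infty$. By Lemma 2.3, applied fiberwise, each homogeneous Volterra piece $q_l$ has a kernel obeying the parabolic homogeneity $(q_l)^{\vee}(x,v;\eta,t)=t^{-\frac{l+n+2}{2}}(q_l)^{\vee}(x,v;t^{-1/2}\eta,1)$ for $t>0$. Substituting $v=\sqrt{t}\,w$, so that $dv=t^{(n-a)/2}dw$ because $\dim N_x^\phi=n-a$, the spatial argument becomes $(0,(1-\phi'(x))w)$, independent of $t$, and Taylor expanding the base point gives
\[
\widetilde{\phi}(x,0)^{-1}\widetilde{\phi}(x,\sqrt{t}\,w)\,q_l\bigl(x,\sqrt{t}\,w;\,\cdot\,\bigr)=\sum_{\alpha}\frac{t^{|\alpha|/2}\,w^\alpha}{\alpha!}\,\partial_v^\alpha q^{\wedge(T^*M)}_l(x,0;\,\cdot\,),
\]
so the $(l,\alpha)$-term carries the power $t^{-\frac{l+n+2}{2}+\frac{|\alpha|}{2}+\frac{n-a}{2}}=t^{\frac{|\alpha|-l-a-2}{2}}$. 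Collecting the terms with a common power of $t$, i.e.\ those with $l=2[\frac{m}{2}]-2j+|\alpha|$ for a fixed $j\ge 0$, and carrying out the now $t$-free $w$-integration reproduces exactly the coefficient $I_Q^{(j)}(x)$ of the statement; the $\alpha$-sum is finite since $l\le m$ bounds $|\alpha|$, and the leading ($j=0$) term carries $t^{-(\frac{a}{2}+[\frac{m}{2}]+1)}$, which is the most singular power because $l\le m$ and $|\alpha|\ge 0$.

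Two analytic points need care. First, the rescaled integrals $\int w^\alpha\bigl(\partial_v^\alpha q^{\wedge(T^*M)}_l\bigr)^{\vee}(x,0;\,0,(1-\phi'(x))w;\,1)\,dw$ must converge and depend smoothly on $x$: here one invokes that $1-\phi'(x)$ is invertible on $N_x^\phi$ — a fixed point carries no normal eigenvalue equal to $1$ — so that $y=(1-\phi'(x))w$ is a legitimate change of variable onto all of $N_x^\phi$, while $(q_l)^{\vee}(x,0;\,\cdot\,,1)$ decays rapidly in its spatial slot since it is, up to polynomial factors, of heat-kernel type; the Jacobian $|\det(1-\phi'(x))|^{-1}$ produced here is exactly what later feeds into the factor $\nu_\phi(R^{N^\phi})$. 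Second, and this is the genuinely technical step, the three error terms — the smoothing operator $R$, the symbol remainder, and the Taylor remainders in $v$ — must each be shown to contribute $O(t^N)$ \emph{uniformly} for $x$ in a compact subset of $M_a^\phi$; this is done with the symbol estimates of $S_V^m$ together with the composition and mapping properties of Volterra $\Psi$DOs. I expect this uniform remainder control, rather than the essentially bookkeeping computation of the coefficients $I_Q^{(j)}(x)$, to be the main obstacle; specializing the result to $Q=(\tilde{D}_E^2+\partial_t)^{-1}$, which has order $m=-2$, then yields the small-time heat-kernel expansion used in the proof of Theorem 3.9.
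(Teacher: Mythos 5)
The paper itself offers no proof of this Proposition (it is recalled verbatim from \cite{PW}), so your sketch can only be measured against the standard argument there, and in outline you reproduce it correctly: discard the smoothing part (a smooth Volterra kernel vanishes to infinite order at $t=0^+$), use $\phi(\exp_xv)=\exp_x(\phi'(x)v)$ in tubular coordinates, expand the symbol into parabolic homogeneous pieces, rescale $v=\sqrt{t}\,w$, and invoke the kernel homogeneity $\breve q_l(x,y,t)=t^{-(l+n+2)/2}\breve q_l(x,t^{-1/2}y,1)$ together with Taylor expansion at $v=0$. Your power count $t^{(|\alpha|-l-a-2)/2}$ is right, and it in fact restores the factor $t^{j}$ that is missing from the exponent in the first display of the statement; your remarks on the invertibility of $1-\phi'(x)$ on $N^\phi_x$ and on uniform control of the three remainders are also the correct technical points.

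There is, however, one genuine gap, in the step ``collecting the terms with a common power of $t$.'' The pairs $(l,\alpha)$ with $l\le m$, $|\alpha|\ge 0$ produce \emph{all} half-integer increments $t^{(|\alpha|-l-a-2)/2}$, whereas the asserted expansion proceeds in integer steps and retains only the pairs with $|\alpha|+l$ even (equivalently $l=2[\frac m2]-2j+|\alpha|$ for some integer $j$). You silently drop the terms with $|\alpha|+l$ odd, and they do not disappear for bookkeeping reasons: one must prove that they integrate to zero. The missing argument is the homogeneity of the extended symbol under the dilation $\lambda=-1$: the definition of $g_\lambda$ gives $g_l(x;-\xi,-\nu;\tau)=(-1)^{l}g_l(x;\xi,\nu;\tau)$, hence $\breve q_l(x,0;-y;1)=(-1)^{l}\breve q_l(x,0;y;1)$, and the same holds for $\partial_v^\alpha q_l$ since differentiation in the base variable does not affect homogeneity in the Fourier variables. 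Under $v\mapsto -v$ the integrand $\frac{v^\alpha}{\alpha!}\bigl(\partial_v^\alpha q^{\wedge(T^*M)}_{l}\bigr)^\vee\bigl(x,0;0,(1-\phi'(x))v;1\bigr)$ therefore changes by the sign $(-1)^{|\alpha|+l}$, so its integral over $N^\phi_x\cong{\mathbb{R}}^{b}$ vanishes when $|\alpha|+l$ is odd. Inserting this parity argument closes your proof; without it the expansion you derive is not the one stated.
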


Similar to Theorem 1.2 in \cite{LM} and Section 2 (d) in \cite{JMB}, we have
\begin{eqnarray}
{\rm Str}_{\tau}[\tilde{\phi}{\rm exp}(-t\tilde{D}_{E}^{2})]
&=&(\sqrt{-1})^{\frac{k}{2}}\int_{M}{\rm Str}_{\epsilon}\Big[ \hat{c}(E,g^{E})k_t(x,\phi(x))\Big]{\rm d}
x\nonumber\\
&=&(\sqrt{-1})^{\frac{k}{2}}\int_{M}{\rm Str}_{\epsilon}[\hat{c}(E,g^{E})K_{(\tilde{D}_{E}^{2}+\partial_t)^{-1}}(x,\phi(x),t)]{\rm d}
x.
\end{eqnarray}

We will compute the local index in this trivialization.

Let $(V, q)$ be a finite dimensional real vector space equipped with a quadratic form.
Let $C(V, q)$ be the associated Clifford algebra, i.e. the associative algebra generated
by $V$ with the relations $v \cdot w + w \cdot v =-2q(v,w)$ for $v,w \in V$. Let $\{e_{1},\cdots, e_{n}\}$ be the
orthomormal basis of $(V, q)$, Let $C(V, q)\hat{\otimes}C(V,-q)$ be the grading tensor product of
$C(V, q)$ and $C(V,.q)$ and $\wedge^{ *}V \hat{\otimes}\wedge^{ *}V$ be the grading tensor product of $\wedge^{ *}V$ and $\wedge^{ *}V$.
Define the symbol map:
\begin{equation}
\sigma: C(V, q)\hat{\otimes}C(V,-q)\rightarrow \wedge^{ *}V \hat{\otimes}\wedge^{ *}V;
\end{equation}
where
$\sigma(c(e_{j_{1}})\cdots c(e_{j_{l}})\otimes 1 )=e^{j_{1}}\wedge \cdots\wedge e^{j_{1}}\otimes 1$,
$\sigma(1 \otimes\hat{ c}(e_{j_{1}})\cdots \hat{ c}(e_{j_{l}}))=1 \otimes   \hat{e}^{j_{1}}\wedge \cdots\wedge \hat{e}^{j_{1}}$.
Using the interior multiplication $\iota(e_{j}):\wedge^{ *}V \rightarrow\wedge^{ *-1}V  $ and the exterior multiplication
$\varepsilon(e_{j}):\wedge^{ *}V \rightarrow\wedge^{ *+1}V  $ , we define representations of $C(V, q)$ and $C(V,-q)$ on the exterior algebra:
\begin{eqnarray}
&&c:C(V,q)\rightarrow {\rm End}\wedge V, ~e_{j}\mapsto c(e_{j}):~\varepsilon(e_{j})-\iota(e_{j}); \\
&&c:C(V,-q)\rightarrow {\rm End}\wedge V, ~e_{j}\mapsto \hat{c}(e_{j}):~\varepsilon(e_{j})+\iota(e_{j});
\end{eqnarray}
The tensor product of these representations yields an isomorphism of superalgebras
\begin{equation}
c\otimes \hat{c}:C(V, q)\hat{\otimes} C(V,-q)\rightarrow {\rm End}\wedge V
\end{equation}
which we will also denote by $c$. We obtain a supertrace (i.e. a linear functional vanishing
on supercommutators) on $C(V, q)\hat{\otimes} C(V,-q)$ by setting ${\rm Str}(a) ={\rm Str}_{{\rm End}\wedge V} [c(a)]$
for $a\in C(V, q)\hat{\otimes} (V,-q)$, where ${\rm Str}_{{\rm End}\wedge V}$ is the canonical supertrace on ${\rm End}V$ .

\begin{lem}
For $1\leq i_{1}<\cdots < i_{p}\leq n $,$1\leq j_{1}<\cdots < j_{q}\leq n $, when $p=q=n$,
\begin{equation}
{\rm Str}[c(e_{i_1})\cdots c(e_{i_n})\hat{c}(e_{i_1})\cdots \hat{c}(e_{i_n})]=(-1)^{\frac{n(n+1)}{2}}2^{n}
\end{equation}
and otherwise equals zero.
\end{lem}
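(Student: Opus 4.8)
The plan is to reduce the computation of the supertrace ${\rm Str}$ on $C(V,q)\hat\otimes C(V,-q)$ to the well-known formula for the canonical supertrace on ${\rm End}\wedge V$ via the isomorphism $c\otimes\hat c$. Recall that under the identification ${\rm End}\wedge V\cong \wedge V\otimes\wedge V^*$, the canonical supertrace ${\rm Str}_{{\rm End}\wedge V}$ picks out (up to a sign and the factor coming from $\dim\wedge V = 2^n$) the coefficient of the ``top'' component: concretely, for a product of Clifford generators, ${\rm Str}_{{\rm End}\wedge V}[c(a)]$ vanishes unless $c(a)$ contains, after normal ordering, the full volume element in the appropriate sense. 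So the first step is to observe that a monomial $c(e_{i_1})\cdots c(e_{i_p})\hat c(e_{j_1})\cdots\hat c(e_{j_q})$ acting on $\wedge V$ is, in terms of exterior and interior multiplications, $(\varepsilon(e_{i_1})-\iota(e_{i_1}))\cdots(\varepsilon(e_{i_p})-\iota(e_{i_p}))(\varepsilon(e_{j_1})+\iota(e_{j_1}))\cdots(\varepsilon(e_{j_q})+\iota(e_{j_q}))$, and to expand this.

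\medskip
\noindent
The key step is a parity/degree-counting argument. The supertrace of an operator on $\wedge V$ is computed by summing over a homogeneous basis $e_{S}=e_{s_1}\wedge\cdots\wedge e_{s_\ell}$ ($S\subseteq\{1,\dots,n\}$) the signed diagonal matrix entries $(-1)^{|S|}\langle e_S^*, A\, e_S\rangle$. For $A$ a monomial in the $\varepsilon$'s and $\iota$'s as above, the diagonal entry $\langle e_S^*, A\, e_S\rangle$ is nonzero only if each index that appears is ``used up'' in a way that returns $e_S$ to itself; since $\varepsilon(e_j)$ raises degree and $\iota(e_j)$ lowers it, and $\varepsilon(e_j)^2=\iota(e_j)^2=0$, a given index $j$ can contribute to a nonzero diagonal entry only if it appears an even number of times among all the factors (splitting as one $\varepsilon(e_j)$ and one $\iota(e_j)$), \emph{or} does not appear at all, in which case however it is never acted upon and one gets a factor from summing over whether $j\in S$ or not, which produces a cancellation $(1)+(-1)=0$ after the sign $(-1)^{|S|}$. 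This forces every index $1,\dots,n$ to appear exactly once among the $c$-factors and exactly once among the $\hat c$-factors — i.e. $p=q=n$ and $\{i_1,\dots,i_n\}=\{j_1,\dots,j_n\}=\{1,\dots,n\}$ — which is precisely the ``otherwise equals zero'' claim.

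\medskip
\noindent
It remains to evaluate the one surviving case. With $p=q=n$ and the indices in increasing order on both sides, I would first use the commutation relations to move each $\hat c(e_i)=\varepsilon(e_i)+\iota(e_i)$ past the block $c(e_1)\cdots c(e_n)$, or more efficiently, compute $c(e_1)\cdots c(e_n)\hat c(e_1)\cdots\hat c(e_n)$ directly on the basis vector $1\in\wedge^0 V$: the operator $\hat c(e_1)\cdots\hat c(e_n)$ applied to $1$ gives $e_1\wedge\cdots\wedge e_n$ (since on $1$ the interior parts vanish), and then $c(e_1)\cdots c(e_n)$ applied to $e_1\wedge\cdots\wedge e_n$ gives, reading right to left, a product of interior multiplications contracting the form down to a scalar multiple of $1$, the scalar being a sign one tracks by counting transpositions; one then checks that $1$ is in fact an eigenvector (the off-diagonal contributions to the supertrace from other basis elements vanish by the same degree bookkeeping, or one simply notes the operator is a scalar since it equals $\pm$ the image of the volume element under the isomorphism, which is central). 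A short induction on $n$, or an explicit count of the number of index transpositions needed to pair up $c(e_n)$ with $\iota(e_n)$, $c(e_{n-1})$ with $\iota(e_{n-1})$, etc., together with the sign $(-1)^{|S|}=(-1)^0=1$ for $S=\emptyset$, produces the factor $(-1)^{n(n+1)/2}$; the remaining factor $2^n = \dim\wedge V$ comes from the normalization of ${\rm Str}_{{\rm End}\wedge V}$ against the identity, or equivalently from the fact that the scalar operator we obtained acts on a $2^n$-dimensional space. I expect the main obstacle to be purely bookkeeping: getting the sign $(-1)^{n(n+1)/2}$ right, which is cleanest to pin down by checking $n=1$ (where ${\rm Str}[c(e_1)\hat c(e_1)] = {\rm Str}[(\varepsilon-\iota)(\varepsilon+\iota)] = {\rm Str}[\varepsilon\iota-\iota\varepsilon]$, and on $\wedge\mathbb{R}^1$ this is $\langle e_1^*, \varepsilon\iota\, e_1\rangle - (-1)\langle 1, -\iota\varepsilon\,1\rangle$... giving $-2 = (-1)^{1}2^1$) and $n=2$, and then invoking the multiplicativity of both sides under $V\mapsto V\oplus V'$ (using that ${\rm Str}$ on a graded tensor product is the product of supertraces) to get the general exponent $\binom{n+1}{2}$.
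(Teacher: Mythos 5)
The paper states this lemma without proof, so there is nothing to compare against directly; I will assess the argument on its own terms. Your reduction to the canonical supertrace on ${\rm End}\wedge V$ and the parity/degree-counting argument for the ``otherwise equals zero'' case are both sound: after expanding each $c(e_j)$ and $\hat c(e_j)$ in $\varepsilon$'s and $\iota$'s, a monomial contributes to a diagonal entry $\langle e_S^*,Ae_S\rangle$ only if every index is balanced between $\varepsilon$'s and $\iota$'s, and an untouched index yields a cancelling pair $(+1)+(-1)=0$ from $j\in S$ versus $j\notin S$; since the index sequences are strictly increasing, this forces $p=q=n$.

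The gap is in the evaluation of the surviving case. Your claim that ``the operator is a scalar since it equals $\pm$ the image of the volume element, which is central'' is false, and the $n=1$ case you yourself quote already refutes it: $c(e_1)\hat c(e_1)=\varepsilon\iota-\iota\varepsilon$ acts by $-1$ on $\wedge^0\mathbb{R}$ and by $+1$ on $\wedge^1\mathbb{R}$, so it is diagonal but not scalar. Consequently, computing only the $(1,1)$-entry via $\hat c(e_1)\cdots\hat c(e_n)1=e_1\wedge\cdots\wedge e_n$ followed by $c(e_1)\cdots c(e_n)$ gives one diagonal entry $(-1)^{n(n+1)/2}$, not the supertrace. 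Your multiplicativity route is the one that actually closes the argument, but it is not a ``multiplicativity of both sides'' as stated: the right-hand side $(-1)^{n(n+1)/2}2^n$ is not multiplicative in $n$ (for $n=1,2$ it gives $-2,-4$, whereas $(-2)^2=4$), and the operator $c(e_1)\cdots c(e_n)\hat c(e_1)\cdots\hat c(e_n)$ is not of the form $A\hat\otimes B$ under $V=V'\oplus V''$ until you reorder. What works is: anticommute $\hat c(e_1),\dots,\hat c(e_{n-1})$ leftward to interleave, obtaining
\[
c(e_1)\cdots c(e_n)\hat c(e_1)\cdots\hat c(e_n)=(-1)^{n(n-1)/2}\prod_{j=1}^{n}c(e_j)\hat c(e_j),
\]
and only then apply multiplicativity of ${\rm Str}$ over the graded tensor product of the $2$-dimensional factors (each $c(e_j)\hat c(e_j)$ is even, so the factors commute and tensor without further sign), giving $(-1)^{n(n-1)/2}\,(-2)^n=(-1)^{n(n+1)/2}2^n$. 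As written, your proof omits the reordering sign $(-1)^{n(n-1)/2}$ on the left and the non-multiplicativity of the exponent on the right; these two omissions happen to compensate, but neither is addressed, so the argument has a genuine hole.
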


We will also denote the volume element in  $\wedge V\hat{\otimes}\wedge V$ by
$ \omega= e^{1}\wedge \cdots \wedge e^{n} \wedge\hat{e}^{1}\wedge \cdots \wedge \hat{e}^{n}$.
For $a\in \wedge V\hat{\otimes}\wedge V$, let ${\rm T}a$ be the coefficient of $\omega$. The linear functional
${\rm T}:\wedge V\hat{\otimes}\wedge V \rightarrow {\rm R}$ is called the Berezin trace. Then for a $a\in C(V, q)\hat{\otimes}(V,.q)$ , one
has ${\rm Str}_{s}(a)=(-1)^{\frac{n(n+1)}{2}}2^{n}({\rm T}\sigma)(a)$.  We define the Getzler order as follows:
\begin{equation}
{\rm deg} \partial_{ j}=\frac{1}{2} {\rm deg}\partial_{ t}=- {\rm deg} x^{ j}=1,~~~{\rm deg}c(e_{j})=1,~~{\rm deg}\hat{c}(e_{j})=0.
\end{equation}

 Let $Q\in \Psi_V^*({\mathbb{R}}^n\times {\mathbb{R}}, \wedge^*T^*M)$ have symbol
 \begin{equation}
q(x,\xi,\tau)\sim  \sum_{k\leq m'}q_{k}(x,\xi,\tau),
\end{equation}
 where $q_{k}(x,\xi,\tau)$ is an order $k$ symbol. Then taking components
in each subspace $\wedge^jT^*M\otimes\wedge^lT^*M$ of $\wedge T^*M\otimes \wedge T^*M$
and using Taylor expansions at $x = 0$ give formal expansions
\begin{equation}
\sigma[q(x,\xi,\tau)]\sim\sum_{j,k}\sigma[q_{k}(x,\xi,\tau)]^{(j,l)}\sim\sum_{j,k,\alpha}\frac{x^\alpha}{\alpha!}
\sigma[\partial_x^\alpha q_{k}(0,\xi,\tau)]^{(j,l)}.
\end{equation}
The symbol
$\frac{x^\alpha}{\alpha!} \sigma[\partial_x^\alpha
q_{k}(0,\xi,\tau)]^{(j,l)}$ is the Getzler homogeneous
of $k+j-|\alpha|$. So we can expand $\sigma[q(x,\xi,\tau)]$ as
\begin{equation}
\sigma[q(x,\xi,\tau)]\sim \sum_{j\geq 0}q_{(m-j)}(x,\xi,\tau),~~~~~~~~~q_{(m)}\neq 0,
\end{equation}
where $q_{(m-j)}$ is a Getzler homogeneous symbol of degree $m-j$.

\begin{defn}
The integer $m$ is called as the Getzler order of $Q$. The symbol $q_{(m)}$ is the principle Getzler
homogeneous symbol of $Q$. The operator $Q_{(m)}=q_{(m)}(x,D_x,D_t)$ is called as the model operator of $Q$.
\end{defn}

 Let $e_1, \dots , e_n$ be an oriented orthonormal basis of $T_{x_0}M$ such that $e_1,\cdots , e_a$ span $T_{x_0}M^{\phi}$ and
$e_{a+1},\cdots , e_n$ span $N_{ x_0}^{\phi}$ . This provides us with normal coordinates $(x_1, \cdots , x_n)\rightarrow {\rm
exp}_{x_0}(x^1e_1+\cdots+x^ne_n).$ Moreover using parallel translation enables us to construct a synchronous local oriented
tangent frame $e_1(x), . . . , e_n(x)$ such that $e_1(x),\cdots , e_a(x)$ form an oriented frame of $TM_{a}^{\phi}$ and $e_{a+1}(x),
\cdots , e_n(x)$ form an (oriented) frame $N^{\tau}$ (when both frames are restricted to $M^{\phi}).$ This gives rise to trivializations of
the tangent and exterior algebra  bundles. Write
\begin{equation}
\phi'(0)=\left(\begin{array}{lcr}
  1  & 0  \\
   0  &  \phi^N
\end{array}\right)={\rm exp}(A_{ij}),
\end{equation}
where $A_{ij}\in \mathfrak{s}o(n) $.

Let $\wedge(n)=\wedge^*{\mathbb{R}}^n$ be the  exterior algebra of ${\mathbb{R}}^n$. We shall use the
following gradings on $\wedge(n) \hat{\otimes}\wedge(n)$,
\begin{equation}
\wedge(n)\hat{\otimes}\wedge(n)=\bigoplus_{\begin{array}{lcr}
  1\leq k_{1}, k_{2}\leq a \\
  1\leq \overline{l}_{1}, \overline{l}_{2}\leq b
\end{array}}\wedge^{k_{1},\overline{l}_{1}}(n) \hat{\otimes}\wedge^{k_{2},\overline{l}_{2}}(n),
\end{equation}
 where $\wedge^{k,\overline{l}}(n)$ is the space of forms $dx^{i_1}\wedge\cdots\wedge
dx^{i_{k+\overline{l}}}$ with $1\leq i_1<\cdots <i_k\leq a$ and $a + 1\leq i_{k+1} < \cdots < i_{k+\overline{l}}\leq n.$
 Given a form
$\omega\in\wedge(n)\hat{\otimes}\wedge(n)$, denote by $\omega^{(k_{1},\overline{l}_{1}),(k_{2},\overline{l}_{2})} $
its component in $\wedge(n)^{(k_{1},\overline{l}_{1})}\hat{\otimes}\wedge^{(k_{2},\overline{l}_{2})}(n) $.
We denote by $|\omega|^{(a,0),(a,0)}$ the Berezin integral $|\omega^{(*,0),(*,0)}|^{(a,0),(a,0)}$
of its component $\omega^{(*,0),(*,0)}$ in $\wedge^{(*,0),(*,0)}(n)$.

Let $A\in Cl (V,q)\hat{\otimes}Cl (V,-q)$, then
\begin{eqnarray}
{\rm Str}[\widetilde{\phi}A]&=&(-1)^{\frac{n}{2}}2^{n}\sum _{0\leq l_{2}\leq b}|\sigma(\widetilde{\phi})^{((0,b),(0,l_{2}))}
 \sigma(A)^{((a,0),(a,b-l_{2}))}|^{(n,n)} \nonumber\\
&&+(-1)^{\frac{n}{2}}2^{n}\sum_{0\leq l_{1}<b,0\leq l_{2}\leq b}
|\sigma(\widetilde{\phi})^{((0,l_{1}),(0,l_{2}))}\sigma(A)^{((a,b-l_{1}),(a,b-l_{2}))}|^{(n,n)}.
\end{eqnarray}

In order to calculate ${\rm Str}[\widetilde{\phi}A]$, we need to consider the representation of
$ |\sigma(\widetilde{\phi})^{((0,b),(0,l_{2}))}
 \sigma(A)^{((a,0),(a,b-l_{2}))}|^{(n,n)}$.

 Let the matrix $\phi^{N}$ equal
  \begin{equation}
\phi^{N}=\begin{pmatrix}
  A_{\frac{a}{2}+1} \\
&\ddots &
& \text{{\huge{0}}}\\
& &  \ddots\\
 & \text{{\huge{0}}} & & \ddots\\
& & & &  A_{\frac{n}{2}}
 \end{pmatrix},
 ~  A_{\frac{a}{2}+1}=\left(\begin{array}{lcr}
{\rm cos }\theta_{\frac{a}{2}+1} & {\rm sin }\theta_{\frac{a}{2}+1} \\
   {\rm -sin }\theta_{\frac{a}{2}+1} &  {\rm cos }\theta_{\frac{a}{2}+1}
\end{array}\right),
~ A_{\frac{n}{2}}=\left(\begin{array}{lcr}
 {\rm cos }\theta_{\frac{n}{2}} & {\rm sin }\theta_{\frac{n}{2}}  \\
   {\rm -sin }\theta_{\frac{n}{2}} &  {\rm cos }\theta_{\frac{n}{2}}
\end{array}\right).
 \end{equation}
From Lemma 3.2 in \cite{ZJW}  , we have

\begin{lem}
 \begin{eqnarray}
\tilde{\phi}&=&(\frac{1}{2})^{\frac{n-a}{2}}\prod_{ j=\frac{a}{2}+1}^{n}\Big[(1+{\rm cos }\theta_{j})
-(1-{\rm cos }\theta_{j})c(e_{2j-1})c(e_{2j})\hat{c}(e_{2j-1})\hat{c}(e_{2j})\nonumber\\
&&+{\rm sin }\theta_{j}\big(c(e_{2j-1})c(e_{2j})-\hat{c}(e_{2j-1})\hat{c}(e_{2j})\big)
\Big]
\end{eqnarray}
\end{lem}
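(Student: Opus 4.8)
The plan is to compute the action of the lifted isometry $\tilde\phi=\phi^{-1,*}$ on $\wedge T^*_{x_0}M\hat\otimes\wedge T^*_{x_0}M$ fiberwise at a fixed point $x_0\in M_a^\phi$, using the block-diagonal normal form of $\phi'(0)$ recorded in $(3.31)$ and the refined $2\times2$ rotation blocks $A_j$ of $\phi^N$ in the display preceding the statement. Since $\tilde\phi$ acts trivially on the fixed directions $e_1,\dots,e_a$, only the normal block $\phi^N=\bigoplus_{j=a/2+1}^{n/2}A_j(\theta_j)$ contributes, and because the exterior-algebra representation $c\otimes\hat c:C(V,q)\hat\otimes C(V,-q)\to\mathrm{End}\wedge V$ from $(3.24)$ is multiplicative over the orthogonal direct sum $N^\phi=\bigoplus_j\mathbb R e_{2j-1}\oplus\mathbb R e_{2j}$, it suffices to treat a single $2$-dimensional rotation block and then take the product over $j$. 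This reduces the whole lemma to the following model computation: for a rotation by angle $\theta$ in the plane spanned by $u:=e_{2j-1},w:=e_{2j}$, express the induced map on $\wedge\langle u,w\rangle\hat\otimes\wedge\langle u,w\rangle$ (equivalently, on $\mathrm{End}\wedge\langle u,w\rangle$) in terms of $c(u),c(w),\hat c(u),\hat c(w)$.

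First I would write down the $2$-dimensional model explicitly. The isometry $R_\theta$ sends $u\mapsto\cos\theta\,u-\sin\theta\,w$, $w\mapsto\sin\theta\,u+\cos\theta\,w$ (the sign convention matching $(3.32)$ and the fact that we use $\phi^{-1,*}$); its action on the $4$-dimensional space $\wedge^*\langle u,w\rangle$ is $1$ on $\wedge^0$ and $\wedge^2$, and the rotation matrix on $\wedge^1$. The key identity to establish is that, as operators on $\wedge^*\langle u,w\rangle$,
\begin{equation*}
R_\theta=(1+\cos\theta)\cdot\tfrac12+\text{(a correction built from Clifford generators)},
\end{equation*}
and the natural candidates for the correction are the operators $c(u)c(w)\hat c(u)\hat c(w)$ (which is $\pm1$ on the various homogeneous pieces and so can absorb the difference between the scalar $1+\cos\theta$ and the true eigenvalues $1,e^{\pm i\theta}$-type behaviour once one passes to the $\pm1$ eigenspaces) and $c(u)c(w)-\hat c(u)\hat c(w)$ (an odd-looking combination that produces the $\sin\theta$ off-diagonal terms while the $c\hat c+c\hat c$ cross-terms vanish by $(2.6)$). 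Concretely I would compute the matrix of each of the three operators $c(u)c(w)\hat c(u)\hat c(w)$, $c(u)c(w)$, $\hat c(u)\hat c(w)$ in the ordered basis $\{1,u,w,u\wedge w\}$ of $\wedge^*\langle u,w\rangle$ using $(2.4)$--$(2.6)$ and the definitions $c(e)=e\wedge-i_e$, $\hat c(e)=e\wedge+i_e$, then verify by direct linear algebra that
\begin{equation*}
R_\theta=\tfrac12\Big[(1+\cos\theta)-(1-\cos\theta)\,c(u)c(w)\hat c(u)\hat c(w)+\sin\theta\big(c(u)c(w)-\hat c(u)\hat c(w)\big)\Big].
\end{equation*}
Taking the tensor product over $j=a/2+1,\dots,n/2$ and collecting the overall factor $(1/2)^{(n-a)/2}$ then yields exactly the stated formula for $\tilde\phi$, once one observes that cross-terms between different blocks commute and multiply cleanly because the generators for distinct blocks anticommute in pairs and hence the even products $c(e_{2i-1})c(e_{2i})$ etc. commute across blocks.

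The main obstacle is bookkeeping rather than conceptual: one must be careful with (i) the sign/orientation conventions, i.e. that $\tilde\phi$ is $\phi^{-1,*}$ so the angle that appears is consistent with $(3.32)$, and that the grading operator $\epsilon$ and the factors $(-1)$ coming from $i_e$ acting on $1$-forms versus $2$-forms are tracked correctly; and (ii) verifying that the three candidate operators really do span, together with the identity, the image of the one-parameter rotation group inside $\mathrm{End}\wedge^*\langle u,w\rangle$ — equivalently that no $\cos^2\theta$ or higher term is needed, which holds because $R_\theta$ is a genuine representation and the chosen operators already reproduce the correct characteristic behaviour on each of the three eigenspaces $\wedge^0\oplus\wedge^2$, $\wedge^1_{(+i)}$, $\wedge^1_{(-i)}$. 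Since this is precisely Lemma 3.2 of \cite{ZJW} applied in the present normalization, I would cite that computation for the $2$-dimensional model and only spell out the passage to the product over blocks and the insertion of the trivial fixed-direction factors.
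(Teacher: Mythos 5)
Your plan — restrict to a single $2\times 2$ rotation block in $N^\phi$, compute the matrices of $c(u)c(w)$, $\hat c(u)\hat c(w)$ and $c(u)c(w)\hat c(u)\hat c(w)$ in the basis $\{1,u,w,u\wedge w\}$ of $\wedge^*\langle u,w\rangle$, verify the one-block identity by direct linear algebra, and then take the tensor product over blocks (using that even products such as $c(e_{2i-1})c(e_{2i})$ commute with those from a distinct block) — is exactly the standard proof and is the content of Lemma 3.2 of \cite{ZJW}, which is all the paper itself cites. So the approach matches.

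One bookkeeping caution, of precisely the kind you flagged yourself: with $c(e)=e\wedge-i_e$, $\hat c(e)=e\wedge+i_e$, the operator $c(u)c(w)\hat c(u)\hat c(w)$ is $\operatorname{diag}(-1,1,1,-1)$ and $\tfrac12\big(c(u)c(w)-\hat c(u)\hat c(w)\big)$ is the complex structure $u\mapsto w$, $w\mapsto -u$ on $\wedge^1$ (and $0$ on $\wedge^0\oplus\wedge^2$); hence the right-hand side of your displayed single-block identity acts on $\wedge^1$ by
\begin{equation*}
\begin{pmatrix}\cos\theta & -\sin\theta\\ \sin\theta & \cos\theta\end{pmatrix},
\end{equation*}
i.e. it sends $u\mapsto \cos\theta\,u+\sin\theta\,w$, not $u\mapsto \cos\theta\,u-\sin\theta\,w$ as you stated for $R_\theta$. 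This is the transpose of the block $A_j$ in $(3.32)$ under the usual column convention, so either the sign of $\theta_j$ or the matrix convention needs to be flipped when you match against $(3.32)$; it does not affect the validity of the method or the final index formula (which only sees $\cos\theta_j$ and $L_j^2$ through the determinants), but it is exactly where a direct verification would correct the sketch.
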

Then we obtain
 \begin{eqnarray}
\sigma(\tilde{\phi})^{((0,b),(0,l_{2}))}&=&(\frac{1}{2})^{\frac{n-a}{2}}\sigma\Big\{\prod_{ j=\frac{a}{2}+1}^{n}\Big[
-(1-{\rm cos }\theta_{j})c(e_{2j-1})c(e_{2j})\hat{c}(e_{2j-1})\hat{c}(e_{2j})\nonumber\\
&&+{\rm sin }\theta_{j}\big(c(e_{2j-1})c(e_{2j})\big)\Big]\Big\}^{((0,b),(0,l_{2}))}\nonumber\\
&=&(\frac{1}{2})^{\frac{n-a}{2}}e^{a+1}\wedge\cdots\wedge e^{n}\sigma\Big\{ \prod_{ j=\frac{a}{2}+1}^{n}
\Big[
-(1-{\rm cos }\theta_{j})\hat{c}(e_{2j-1})\hat{c}(e_{2j})
+{\rm sin }\theta_{j}\Big] \Big\}^{(0,l_{2})}\nonumber\\
&=&(\frac{1}{2})^{\frac{n-a}{2}}e^{a+1}\wedge\cdots\wedge e^{n} \sigma\Big\{\prod_{ j=\frac{a}{2}+1}^{n}2{\rm sin }\frac{\theta_{j}}{2}
\Big[{\rm cos }\frac{\theta_{j}}{2}
-{\rm sin }\frac{\theta_{j}}{2}\hat{c}(e_{2j-1})\hat{c}(e_{2j})\Big]\Big\}^{(0,l_{2})} \nonumber\\
&=&(\frac{1}{2})^{\frac{n-a}{2}}e^{a+1}\wedge\cdots\wedge e^{n}{\rm det }^{\frac{1}{2}}(1-\phi^{N})
\sigma\Big[{\rm exp}\big( -\frac{1}{4}\sum_{ 1\leq i,j \leq n} A_{ij}\hat{c}(e_{i})\hat{c}(e_{j}) \big)\Big]^{(0,l_{2})}\nonumber\\
&=&(\frac{1}{2})^{\frac{n-a}{2}}e^{a+1}\wedge\cdots\wedge e^{n}{\rm det }^{\frac{1}{2}}(1-\phi^{N})
\sigma\Big[{\rm exp}\big(-\frac{1}{4}\sum_{ 1\leq i,j\leq k}  (L_{1})_{ij}
  \hat{c}( f_{i})\hat{c}(f_{j})\nonumber\\
&&-\frac{1}{4}\sum_{ 1\leq i,j\leq n-k} (L_{2})_{k+i,k+j}
   \hat{c}( h_{i})\hat{c}(h_{j}) \big)\Big]^{(0,l_{2})}.
\end{eqnarray}

Next we  calculate $ |\sigma(A)|^{((a,0),(a,b-l_{2}))}$.
In the sequel, we shall use the following ``curvature forms": $R':=(R_{i,j})_{1\leq i,j\leq a}$, $R'':=(R_{a+i,a+j})_{1\leq i,j\leq b}$.
Let
 \begin{eqnarray*}
&&\dot{R}=\frac{1}{4}\sum_{ 1\leq \alpha,\beta\leq k} \langle R^{E}
f_{\alpha},  f_{\beta}\rangle   \hat{c}( f_{\alpha})\hat{c}(f_{\beta}),\nonumber\\
&&\ddot{R}=\frac{1}{4}\sum_{ 1\leq s,t\leq n-k} \langle R^{E^\perp}h_{s}, h_{t} \rangle
   \hat{c}( h_{s})\hat{c}(h_{t});
\end{eqnarray*}
and
 \begin{eqnarray*}
&&\tilde{\dot{R}}=\frac{1}{4}\sum_{ 1\leq \alpha,\beta\leq k} \langle (R^{E}-L_{1})
f_{\alpha},  f_{\beta}\rangle   \hat{c}( f_{\alpha})\hat{c}(f_{\beta}),\nonumber\\
&&\tilde{\ddot{R}}=\frac{1}{4}\sum_{ 1\leq s,t\leq n-k} \langle (R^{E^\perp}-L_{2})h_{s}, h_{t} \rangle
   \hat{c}( h_{s})\hat{c}(h_{t}).
\end{eqnarray*}

By (2.19), we get

\begin{prop}
The model operator of $F$ is
\begin{eqnarray}
F_{(2)}&=&-\sum_{r=1}^n\big(\partial_r+\frac{1}{8}\sum_{1\leq i,j,l\leq n}  \langle R^{TM}(e_{i},e_{j})e_{l}, e_{r} \rangle
        y_{l} e^{i}\wedge e^{j}\big)^2 \nonumber\\
   &&+\frac{1}{8}\sum_{ 1\leq i,j\leq n}\sum_{ 1\leq \alpha,\beta\leq k} \langle R^{E}(e_{i},e_{j})f_{\beta}, f_{\alpha} \rangle
    e^{i}\wedge e^{j}\hat{c}( f_{\alpha})\hat{c}(f_{\beta})  \nonumber\\
&& +\frac{1}{8}\sum_{ 1\leq i,j\leq n}\sum_{ 1\leq s,t\leq n-k} \langle R^{E^\perp}(e_{i},e_{j})h_{t}, h_{s} \rangle
    e^{i}\wedge e^{j}\hat{c}( h_{s})\hat{c}(h_{t}).
\end{eqnarray}
\end{prop}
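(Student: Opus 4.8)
The plan is to start from the Lichnerowicz-type identity (2.19) for $\tilde{D}_{E}^{2}$ and to carry out the Getzler rescaling term by term, retaining only the pieces of top Getzler order. First I would fix $x_{0}\in M^{\phi}$, pass to the synchronous normal coordinates $(x^{1},\dots,x^{n})$ and the parallel oriented orthonormal frame $e_{1}(x),\dots,e_{n}(x)$ introduced above, and trivialize $\wedge(T^{*}M)$ by this frame; here $F$ is $\tilde{D}_{E}^{2}$ written in this trivialization (the term $\partial_{t}$, if present, being already parabolic- and Getzler-homogeneous of degree two, passes unchanged to the model, so one concentrates on $\tilde{D}_{E}^{2}$). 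In this trivialization the lift of the Levi-Civita connection to $\wedge(T^{*}M)$ has the standard form
\[
\nabla^{TM}_{\partial_{r}}=\partial_{r}+\tfrac14\sum_{i,j}\omega_{r,ij}(x)\,c(e_{i})c(e_{j})+\tfrac14\sum_{i,j}\omega_{r,ij}(x)\,\hat{c}(e_{i})\hat{c}(e_{j}),
\]
where $\omega_{r,ij}(x_{0})=0$ and, by the usual normal-coordinate expansion of the connection form, $\omega_{r,ij}(x)=\tfrac12\sum_{l}\langle R^{TM}(e_{r},e_{l})e_{i},e_{j}\rangle\,x^{l}+O(|x|^{2})$.

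Next I would assign Getzler orders according to the convention displayed above ($\deg\partial_{r}=\deg c(e_{j})=1$, $\deg x^{j}=-1$, $\deg\hat{c}(e_{j})=0$, $\deg\partial_{t}=2$). Then the $\hat{c}\hat{c}$-part of $\nabla^{TM}_{\partial_{r}}$ has order $-1$ while its $cc$-part has order $1$, so $\nabla^{TM}_{\partial_{r}}$ has Getzler order $1$ with model
\[
\partial_{r}+\tfrac18\sum_{i,j,l}\langle R^{TM}(e_{i},e_{j})e_{l},e_{r}\rangle\,y_{l}\,e^{i}\wedge e^{j},
\]
using the pair symmetry $\langle R^{TM}(e_{r},e_{l})e_{i},e_{j}\rangle=\langle R^{TM}(e_{i},e_{j})e_{l},e_{r}\rangle$. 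Since $-\Delta^{TM}=-\sum_{r}\nabla^{TM,2}_{\partial_{r}}+\sum_{r}\nabla^{TM}_{\nabla^{TM}_{e_{r}}e_{r}}$, and since the remainder of $\nabla^{TM}_{\partial_{r}}$ beyond its order-$1$ model has order $-1$ (so the cross terms with $\partial_{r}$ have order $0$) while $\nabla^{TM}_{e_{r}}e_{r}$ vanishes at $x_{0}$ (so $\nabla^{TM}_{\nabla^{TM}_{e_{r}}e_{r}}$ has order $\le 0$), the operator $-\Delta^{TM}$ has Getzler order $2$ and its model is the sum over $r$ of the squares of the model of $\nabla^{TM}_{\partial_{r}}$, i.e. the first line of $F_{(2)}$.

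Then I would dispose of the remaining summands of (2.19). The two curvature terms $\tfrac18\sum\langle R^{E}(e_{i},e_{j})f_{\beta},f_{\alpha}\rangle\,c(e_{i})c(e_{j})\hat{c}(f_{\alpha})\hat{c}(f_{\beta})$ and $\tfrac18\sum\langle R^{E^{\perp}}(e_{i},e_{j})h_{t},h_{s}\rangle\,c(e_{i})c(e_{j})\hat{c}(h_{s})\hat{c}(h_{t})$ have Getzler order exactly $2$ (two factors $c$, the coefficients frozen at $x_{0}$ being of order $0$), and their models are obtained simply by replacing $c(e_{i})c(e_{j})$ with the symbol $e^{i}\wedge e^{j}$; this produces the last two lines of $F_{(2)}$. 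Every other summand of (2.19) — namely $\tfrac{K}{4}$, $\tfrac12\sum_{\alpha}\hat{c}((\Delta^{TM}-\Delta^{E})f_{\alpha})\hat{c}(f_{\alpha})$, the four terms inside $\sum_{i,\alpha}(\cdots)$, and $\tfrac14\sum_{i,\alpha\neq\beta}\hat{c}(S(e_{i})f_{\alpha})\hat{c}(S(e_{i})f_{\beta})\hat{c}(f_{\alpha})\hat{c}(f_{\beta})$ — carries only $\hat{c}$-factors and at most one covariant derivative, hence has Getzler order $\le 1<2$ and does not enter the model. Collecting the surviving pieces gives precisely $F_{(2)}$.

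The only genuinely delicate point is the degree count in the second paragraph: one must verify that the model of $-\Delta^{TM}$ really is the sum over $r$ of the squares of the model of $\nabla^{TM}_{\partial_{r}}$ — that is, that the discrepancy between $\nabla^{TM}_{\partial_{r}}$ and its order-$1$ model, as well as the correction $\nabla^{TM}_{\nabla^{TM}_{e_{r}}e_{r}}$, are of strictly smaller Getzler order — and that the normal-coordinate expansion of $\omega_{r,ij}$ yields exactly the coefficient $\tfrac18\langle R^{TM}(e_{i},e_{j})e_{l},e_{r}\rangle y_{l}$ with the correct numerical constant and index placement. This is the standard Getzler-rescaling computation for a Laplace-type operator, and once it is in place the remainder is routine bookkeeping of Getzler orders.
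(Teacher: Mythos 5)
Your argument is correct and is essentially the approach the paper takes (the paper simply asserts the proposition "by (2.19)", and your write-up supplies the standard Getzler degree-counting details behind that assertion). One minor imprecision: the remainder of $\nabla^{TM}_{\partial_r}$ beyond its order-one model has Getzler order $\le 0$ rather than $-1$ (the $O(|x|^2)$ correction to $\omega_{r,ij}$ times $c(e_i)c(e_j)$ has order $0$), but the cross terms with $\partial_r$ then still have order $\le 1<2$, so the conclusion is unaffected.
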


From the representation of $F_{(2)}$,   we get the model operator of $ \frac{\partial}{\partial t}+\tilde{D}_{E}^{2}$
is $\frac{\partial}{\partial t}+F_{(2)}$. And we have
 \begin{equation}
(\frac{\partial}{\partial t}+F_{(2)})K_{Q_{(-2)}}(x,y,t)=0.
\end{equation}

 Similar to Lemma 2.9 in \cite{PW}, we get
 \begin{lem}
Let $Q\in \Psi^{(-2)}({\mathbb{R}}^{n}\times {\mathbb{R}},,\wedge(T^*M)) $ be a parametrix for $(F_{(2)}+\partial_t)^{-1}$.
 Then

 (1)Q has Getzler order -2 and its model operator is $(F_{(2)}+\partial_t)^{-1}$;

(2) For all $t > 0$,  {\rm }
\begin{eqnarray}
&&(\sqrt{-1})^{\frac{k}{2}}\hat{c}(E,g^{E})I_{(F_{(2)}+\partial_t)^{-1}}(0,t) \nonumber\\
&=&(\sqrt{-1})^{\frac{k}{2}}\hat{c}(E,g^{E})\frac{(4\pi t)^{-\frac{a}{2}}}{ {\rm det}^{\frac{1}{2}}(1-\phi^{N})}{\rm det}^{\frac{1}{2}}
 \Big( \frac{\frac{tR'}{2}}{{\rm sinh}(\frac{tR'}{2})}\Big){\rm det}^{-\frac{1}{2}}(1-\phi^{N}e^{-tR''}){\rm exp}\big(t(\tilde{\dot{R}}
 +\tilde{\ddot{R}})\big).
\end{eqnarray}
\end{lem}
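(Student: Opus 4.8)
The plan is to follow the model heat kernel computation of \cite{PW}, via a Mehler-type formula and a Gaussian integration over the normal fibre. Let $Q=(\tilde{D}_{E}^{2}+\partial_t)^{-1}$ be the Volterra parametrix of the heat operator; by Definition 3.8 it is a Volterra $\Psi DO$ of order $-2$ and $(\tilde{D}_{E}^{2}+\partial_t)Q=I-R$ for a smoothing $R$. By Proposition 3.14 the operator $\tilde{D}_{E}^{2}+\partial_t$ has Getzler order $2$ with model operator $F_{(2)}+\partial_t$, and the Getzler order of a smoothing operator is $-\infty$. Taking principal Getzler symbols on both sides of $(\tilde{D}_{E}^{2}+\partial_t)Q=I-R$ and using that the Getzler symbol is multiplicative under the Volterra composition rule (Definition 3.7(1)), I would conclude that $Q$ has Getzler order $-2$ and that its model operator $Q_{(-2)}$ satisfies $(F_{(2)}+\partial_t)Q_{(-2)}=I$, i.e. $Q_{(-2)}=(F_{(2)}+\partial_t)^{-1}$. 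This is exactly the argument of Lemma 2.9 in \cite{PW}.

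\textbf{Part (2).} Write $F_{(2)}=H+\dot{R}+\ddot{R}$, where $H=-\sum_r\big(\partial_r+\tfrac18\sum\langle R^{TM}(e_i,e_j)e_l,e_r\rangle y_l\,e^{i}\wedge e^{j}\big)^2$ is a generalized harmonic oscillator and $\dot{R}+\ddot{R}$, acting by Clifford multiplication in the $\hat{c}$-variables only, commutes with $H$. Then $e^{-tF_{(2)}}=e^{-tH}e^{-t(\dot{R}+\ddot{R})}$, and the Mehler formula gives the kernel of $e^{-tH}$ as
\begin{equation*}
(4\pi t)^{-\frac n2}\,{\rm det}^{\frac12}\!\Big(\tfrac{tR^{TM}/2}{\sinh(tR^{TM}/2)}\Big)\exp\!\Big(-\tfrac{1}{4t}\big\langle\tfrac{tR^{TM}/2}{\tanh(tR^{TM}/2)}(y-y'),(y-y')\big\rangle\Big),
\end{equation*}
where along $M^{\phi}$ the skew $2$-form-valued matrix $R^{TM}$ is block-diagonal, equal to $R'$ on $TM^{\phi}$ (which is totally geodesic, being the fixed-point set of an isometry) and to $R''$ on $N^{\phi}$. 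I would insert this into the defining formula for $I$, i.e. evaluate at base point $0$, displacement $(0,(1-\phi'(0))v)=(0,(1-\phi^{N})v)$ and time $t$, fold in the $\tilde{\phi}(x,0)^{-1}\tilde{\phi}(x,v)$ twist of Lemma 3.13, and integrate over $v\in N^{\phi}_{0}$. The tangential block gives the Gaussian-free factor ${\rm det}^{\frac12}\big(\tfrac{tR'/2}{\sinh(tR'/2)}\big)$; on the normal block the Gaussian integral $\int_{\R^{b}}\exp\big(-\tfrac14\big\langle\tfrac{tR''/2}{\tanh(tR''/2)}(1-\phi^{N})v,(1-\phi^{N})v\big\rangle\big)\,dv$ evaluates, by the classical Atiyah-Bott determinant identity, to a multiple of ${\rm det}^{-\frac12}(1-\phi^{N}e^{-tR''})/{\rm det}^{\frac12}(1-\phi^{N})$, producing precisely the claimed prefactor. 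Finally, multiplying by $(\sqrt{-1})^{\frac k2}\hat{c}(E,g^{E})$ and absorbing the $\hat{c}$-part of the twist — which by Lemma 3.13 contributes $\exp\big(-\tfrac14\sum(L_{1})_{ij}\hat{c}(f_i)\hat{c}(f_j)-\tfrac14\sum(L_{2})_{k+i,k+j}\hat{c}(h_i)\hat{c}(h_j)\big)$ — replaces $R^{E},R^{E^{\perp}}$ by $R^{E}-L_{1},R^{E^{\perp}}-L_{2}$, so that $e^{-t(\dot{R}+\ddot{R})}$ becomes $\exp\big(t(\tilde{\dot{R}}+\tilde{\ddot{R}})\big)$; collecting everything yields the stated identity.

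The main obstacle will be the normal-fibre Gaussian integration in Part (2): carrying out $\int_{\R^{b}}$ of the Mehler kernel twisted by $(1-\phi^{N})$ and matching it with the compact expression ${\rm det}^{-\frac12}(1-\phi^{N}e^{-tR''})/{\rm det}^{\frac12}(1-\phi^{N})$ — that is, verifying the Atiyah-Bott determinant identity in the present $2$-form-valued setting — together with keeping the $\hat{c}$-twisting and all signs and powers of $2$ and $\sqrt{-1}$ straight. The remaining steps are a routine transcription of the corresponding arguments in \cite{PW}.
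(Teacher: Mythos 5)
The paper gives no proof of this lemma beyond the citation of Lemma 2.9 in \cite{PW}, and your overall strategy --- the parametrix/multiplicativity argument for the principal Getzler symbol in Part (1), Mehler's formula plus a Gaussian integral over the normal fibre in Part (2) --- is exactly the Ponge--Wang argument that citation points to. Part (1) is correct as written.

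Part (2), however, has two concrete problems. First, the Mehler kernel you quote is wrong: the two-point heat kernel of the generalized harmonic oscillator is not a function of $y-y'$; its exponent is $-\tfrac{1}{4t}\big(\langle M y,y\rangle+\langle M y',y'\rangle-2\langle N y,y'\rangle\big)$ with $M=\tfrac{tR/2}{\tanh(tR/2)}$ and $N=\tfrac{tR/2}{\sinh(tR/2)}$ (up to conjugation conventions), and it is precisely the $1/\sinh$ cross term that makes the normal integral $\int_{N^\phi_0}p_t(v,\phi^N v)\,dv$ produce $\det^{-\frac12}(1-\phi^N e^{-tR''})/\det^{\frac12}(1-\phi^N)$. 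With your $(y-y')$ version the Gaussian integral gives $\det^{-1}(1-\phi^N)\det^{-\frac12}\big(\tfrac{tR''/2}{\tanh(tR''/2)}\big)$ times constants, which is not the claimed expression; so the step you defer as "the main obstacle" would in fact fail as set up. Second, your source for the shifts $R^E\mapsto R^E-L_1$, $R^{E^\perp}\mapsto R^{E^\perp}-L_2$ is misplaced. The factor $\exp\big(-\tfrac14\sum(L_1)_{ij}\hat{c}(f_i)\hat{c}(f_j)-\cdots\big)$ you invoke belongs, via (3.33), to $\sigma(\tilde{\phi}(x_0))^{((0,b),(0,l_2))}$, i.e. to the global operator $\tilde{\phi}(x_0)$ that the paper multiplies against $I_Q$ only afterwards, through (3.30) in the computation (3.38). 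The twist actually present inside the definition (3.13)--(3.14) of $I_Q$ is $\tilde{\phi}(x,0)^{-1}\tilde{\phi}(x,v)=1+O(|v|)$, which contributes nothing at leading Getzler order, so the model-operator computation by itself yields $\exp\big(t(\dot{R}+\ddot{R})\big)$ without the $L_i$. Importing the Lemma 3.13 factor into $I_Q$, as you propose, would double-count the $L_i$ when (3.38) multiplies by $\sigma(\tilde{\phi})$ again; you need to decide (and state) which convention the $I$-functional uses and prove the formula consistently with it. A smaller point: with the index ordering of Proposition 3.14 the curvature term of $F_{(2)}$ is $-(\dot{R}+\ddot{R})$, which is why the exponential in the answer carries $+t$; your $F_{(2)}=H+\dot{R}+\ddot{R}$ followed by $e^{-t(\dot{R}+\ddot{R})}$ "becoming" $e^{+t(\tilde{\dot{R}}+\tilde{\ddot{R}})}$ conceals a sign error.
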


Similar to Lemma 3.6 in [Wa]. we have

\begin{lem}
 $Q\in \Psi_V^*({\mathbb{R}}^n\times
{\mathbb{R}},\wedge(T^*M))$ has the Getzler order $m$
and model operator $Q_{(m)}$. Then as $t\rightarrow 0^+$

 (1) $\sigma[I_Q(0,t)]^{(j, l)}=O(t^{\frac{j-m-a-1}{2}})$ , if
$m-j$ is odd.

(2)
$\sigma[I_Q(0,t)]^{(j,l)}=O(t^{\frac{j-m-a-2}{2}})I_{Q(m)}(0,1)^{(j,l)}+O(t^{\frac{j-m-a}{2}})$,  if
$m-j$ is even.

In particular, for $m=-2$ and $j=a$ and $a$ is even we get
 \begin{equation}
\sigma[I_Q(0,t)]^{((a,0),(a,b-l_{2}))}=I_{Q(-2)}(0,1)^{((a,0),(a,b-l_{2}))}+O(t^{\frac{1}{2}}).
\end{equation}
\end{lem}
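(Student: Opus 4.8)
The plan is to run the Getzler-rescaling argument inside the Greiner/Volterra calculus, exactly as for the analogous statement in the Dirac case \cite{Wa2}. Two ingredients are already available: the asymptotic expansion of $I_Q(x,t)$ in powers of $t$ recalled above from \cite{PW}, whose coefficients $I_Q^{(j)}(x)$ are built from the Volterra symbol components of $Q$; and the remark preceding the definition of the Getzler order, which says that $\frac1{\alpha!}\,x^\alpha\,\sigma[\partial_x^\alpha q_k(0,\xi,\tau)]^{(j,l)}$ is Getzler homogeneous of degree $k+j-|\alpha|$, and hence repackages $\sigma[q]$ as $\sum_{j'\geq0}q_{(m-j')}$ with $q_{(m-j')}$ Getzler homogeneous of degree $m-j'$, $q_{(m)}\neq0$, and $Q_{(m)}=q_{(m)}(x,D_x,D_t)$.

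First I would substitute this Getzler expansion of $\sigma[q]$, together with the (finite) Getzler expansion of the twist factor $\widetilde\phi(x,0)^{-1}\widetilde\phi(x,v)$ read off from the explicit form of $\widetilde\phi$ recalled above, into the integral defining $I_Q(0,t)$; the Volterra remainder estimates of \cite{PW} control the tails. This reduces the problem to one Getzler-homogeneous symbol at a time: for a symbol $g$ exactly Getzler homogeneous of degree $d$, one rescales the normal variable by $v\mapsto\sqrt t\,v$ and balances the parabolic homogeneity of the heat calculus against the Clifford rescaling $e^i\mapsto\sqrt t\,e^i$; the outcome is that $\sigma[I_{Q_{(d)}}(0,t)]^{(j,l)}$ has exact order $t^{\frac{j-d-a-2}{2}}$, and for $d=m$ its coefficient is $I_{Q_{(m)}}(0,1)^{(j,l)}$, since $Q_{(m)}=q_{(m)}(x,D_x,D_t)$ by definition of the model operator.

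The step I expect to be the main obstacle — and the one responsible for the dichotomy between (1) and (2) — is showing that the half-integer intermediate powers of $t$ cancel, i.e. that $\sigma[I_{Q_{(d)}}(0,t)]^{(j,l)}=0$ whenever $d-j$ is odd; this is also where the Volterra bookkeeping must be carefully reconciled with the Getzler bookkeeping. The vanishing rests on the parity, modulo $2$, that links the Getzler degree of a monomial to its first-exterior-factor degree: every monomial in the symbol of $\tilde D_E^2+\partial_t$ — and therefore, by the composition rule $q_1\circ q_2\sim\sum\frac1{\alpha!}\partial_\xi^\alpha q_1 D^\alpha_x q_2$, in every operator $Q$ built from the parametrix $(\tilde D_E^2+\partial_t)^{-1}$ — has first-exterior-factor degree congruent to its Getzler degree modulo $2$. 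One checks this directly on the explicit model operator $F_{(2)}$ recalled above: each of the summands $\big(\partial_r+\frac18\sum\langle R^{TM}(e_i,e_j)e_l,e_r\rangle y_l\,e^i\wedge e^j\big)^2$, $\sum\langle R^E(e_i,e_j)f_\beta,f_\alpha\rangle e^i\wedge e^j\,\hat c(f_\alpha)\hat c(f_\beta)$ and $\sum\langle R^{E^\perp}(e_i,e_j)h_t,h_s\rangle e^i\wedge e^j\,\hat c(h_s)\hat c(h_t)$ carries even first-exterior-factor degree and even Getzler degree, and this parity is stable under $\partial_\xi^\alpha q_1 D^\alpha_x q_2$; hence $q_{(d)}^{(j,l)}=0$ for $d\not\equiv j\pmod2$, and the vanishing persists through the inverse Fourier transform and the $v$-integration producing $I_Q$.

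Granting this, the result is assembled by collecting the Getzler pieces. If $m-j$ is odd, the would-be leading contribution of $q_{(m)}$ vanishes by the parity obstruction, so the leading surviving term is that of $q_{(m-1)}$, of order $O(t^{\frac{j-(m-1)-a-2}{2}})=O(t^{\frac{j-m-a-1}{2}})$, which is (1). If $m-j$ is even, $q_{(m)}$ contributes $t^{\frac{j-m-a-2}{2}}I_{Q_{(m)}}(0,1)^{(j,l)}$, the $q_{(m-1)}$-contribution vanishes by parity, and the next surviving contribution, from $q_{(m-2)}$, is $O(t^{\frac{j-m-a}{2}})$, which is (2). The displayed special case is (2) with $m=-2$, $j=a$ and $a$ even: then $\frac{j-m-a-2}{2}=0$ and $\frac{j-m-a}{2}=1$, so $\sigma[I_Q(0,t)]^{((a,0),(a,b-l_2))}=I_{Q(-2)}(0,1)^{((a,0),(a,b-l_2))}+O(t)$, in particular with error $O(t^{1/2})$.
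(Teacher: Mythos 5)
Your overall architecture is the right one (and is essentially the one behind Lemma 3.6 of \cite{Wa2} and Lemma 9.9 of \cite{PW}): decompose $\sigma[q]$ into Getzler homogeneous pieces, check that a piece of Getzler degree $d$ contributes to the $(j,l)$ form component with the exact weight $t^{\frac{j-d-a-2}{2}}$ (your rescaling count $t^{b/2}\cdot t^{-(k+n+2)/2}\cdot t^{|\alpha|/2}$ with $k-|\alpha|=d-j$ is correct), identify the piece $d=m$ with $I_{Q_{(m)}}(0,1)^{(j,l)}$, and then kill the half-integer steps by a parity argument. The gap is in the last step. The parity you invoke --- that every monomial of $\tilde D_E^2+\partial_t$, hence of any symbol obtained from its parametrix by the composition rule, has first-exterior-factor degree congruent to its Getzler degree mod $2$ --- is false, and checking it on the model operator $F_{(2)}$ cannot establish it: $F_{(2)}$ is by definition only the top Getzler-degree part, whereas the dichotomy in (1)--(2) is entirely about the sub-leading Getzler components. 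Concretely, the Lichnerowicz formula (2.19) contains the term $\sum_{i,\alpha}\hat c(S(e_i)f_\alpha)\hat c(f_\alpha)\nabla^{TM}_{e_i}$, whose leading monomial $\hat c(S(e_i)f_\alpha)\hat c(f_\alpha)\partial_i$ has Getzler degree $1$ (one derivative, no $c(e_j)$) and first-factor form degree $0$; so odd Getzler degrees with even form degree genuinely occur, the claimed vanishing $q_{(d)}^{(j,l)}=0$ for $d\not\equiv j\ (\mathrm{mod}\ 2)$ fails at the symbol level, and with it your justification of both (1) and the error term in (2). (It would also tie the lemma to the particular operator $\tilde D_E^2$, whereas it is stated for arbitrary $Q\in\Psi_V^*$.)

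What actually kills the half-integer steps is a general Fourier-parity fact of the Volterra calculus, not a structural property of $\tilde D_E^2$: a parabolic homogeneous symbol satisfies $q_k(x,-\xi,\tau)=(-1)^kq_k(x,\xi,\tau)$, so $\breve q_k(x,-y,t)=(-1)^k\breve q_k(x,y,t)$, and the substitution $v\mapsto -v$ shows that $\int v^\alpha\big(\partial_v^\alpha q_k\big)^\vee(x,0;0,(1-\phi'(x))v;1)\,dv$ vanishes whenever $k-|\alpha|=d-j$ is odd; the integrated contribution vanishes even though the symbol component does not. Equivalently, and more efficiently, one can run the bookkeeping directly on Proposition 3.11: every term of $I_Q^{(N)}$ contributing to first-factor form degree $j$ involves $\partial_v^\alpha q_{2[\frac{m'}{2}]-2N+|\alpha|}$ and hence has Getzler degree exactly $2[\frac{m'}{2}]-2N+j$, which must be $\le m$ since $Q$ has Getzler order $m$; this forces $N\ge[\frac{m'}{2}]+\frac{j-m}{2}$ when $m-j$ is even and $N\ge[\frac{m'}{2}]+\frac{j-m+1}{2}$ when $m-j$ is odd, and inserting these thresholds into the prefactor $t^{-(\frac{a}{2}+[\frac{m'}{2}]+1)+N}$ yields precisely the exponents $\frac{j-m-a-2}{2}$, $\frac{j-m-a-1}{2}$ and $\frac{j-m-a}{2}$ of the statement, with the $N$-minimal term identified with $I_{Q_{(m)}}(0,1)^{(j,l)}$. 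Your numerology in the special case $m=-2$, $j=a$ is correct once this is in place.
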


With all these preparations, we are going to prove the local even dimensional
 equivariant index theorem  for sub-signature operators.
Substituting (3.33), (3.36) into (3.30), we obtain

\begin{eqnarray}
&& \lim_{t\rightarrow 0}{\rm Str}_\varepsilon\left[\tilde{\phi}(x_0) (\sqrt{-1})^{\frac{k}{2}}\hat{c}(E,g^{E}) I_{(F+\partial_t)^{-1}}(x_0,t)\right]\nonumber\\
&=& (-1)^{\frac{n}{2}}2^{n}(\frac{1}{2})^{\frac{n-a}{2}}(4\pi)^{-\frac{a}{2} }(\sqrt{-1})^{\frac{k}{2}}
\big| \widehat{A}(R^{M^{\phi}})\nu_{\phi}(R^{N^{\phi}}) \sigma\big[\hat{c}( f_{1})\cdots \hat{c}( f_{k})
{\rm exp}(\tilde{\dot{R}}+\tilde{\ddot{R}})\big] \big |^{((a,0),n)}
 \nonumber\\
&=&(\frac{1}{\sqrt{-1}})^{\frac{k}{2}} 2^{\frac{n}{2}}\Big\{\widehat{A}(R^{M^{\phi}})\nu_{\phi}(R^{N^{\phi}})
 i^{*}_{M^{\phi}}\Big[{\rm det}^{\frac{1}{2} } \Big({\rm cosh}\big( \frac{R^{E}}{4\pi}-\frac{L_{1}}{2}\big) \Big)\nonumber\\
&&\times {\rm det}^{\frac{1}{2} } \Big( \frac{ {\rm sinh}
(\frac{R^{E^{\perp}}}{4\pi}-\frac{L_{2}}{2})}{\frac{R^{E^{\perp}}}{4\pi}-\frac{L_{2}}{2}}\Big)  {\rm Pf} \Big(\frac{R^{E^{\perp}}}{4\pi}-\frac{L_{2}}{2}\Big)\Big]\Big\}
^{(a,0)}(x_{0}).
\end{eqnarray}
Where  we have used the algebraic result of Proposition 3.13 in \cite{BGV} , and
the Berezin integral in the right hand side of (3.38) is  the application of the following lemma.
 \begin{lem}
\begin{eqnarray}
&&|\sigma\big[\hat{c}( f_{1})\cdots \hat{c}( f_{k}){\rm exp}(\tilde{\dot{R}}+\tilde{\ddot{R}})\big]|^{(n)}\nonumber\\
&=& (-1)^{\frac{n-k}{2}}
{\rm det}^{\frac{1}{2} } \Big({\rm cosh}\big( \frac{R^{E}-L_{1}}{2}\big) \Big)
 {\rm det}^{\frac{1}{2} } \Big( \frac{ {\rm sinh}
(\frac{R^{E^{\perp}}-L_{2}}{2})}{(R^{E^{\perp}}-L_{2})/2}\Big)  {\rm Pf} \Big(\frac{R^{E^{\perp}}-L_{2}}{2}\Big)
.
\end{eqnarray}
\end{lem}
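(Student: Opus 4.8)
The plan is to reduce the computation of the Berezin integral of $\sigma[\hat c(f_1)\cdots\hat c(f_k)\exp(\tilde{\dot R}+\tilde{\ddot R})]$ to a product of two standard Mathai--Quillen type Gaussian integrals, one over the $E$-directions and one over the $E^\perp$-directions. First I would observe that $\tilde{\dot R}$ involves only the Clifford variables $\hat c(f_\alpha)$ and $\tilde{\ddot R}$ involves only the $\hat c(h_s)$, and the two commute; hence $\exp(\tilde{\dot R}+\tilde{\ddot R})=\exp(\tilde{\dot R})\exp(\tilde{\ddot R})$ and, after applying the symbol map $\sigma$, the Berezin integral factorizes as
\begin{equation*}
\bigl|\sigma[\hat c(f_1)\cdots\hat c(f_k)\exp(\tilde{\dot R})]\bigr|^{(k)}\cdot\bigl|\sigma[\exp(\tilde{\ddot R})]\bigr|^{(n-k)},
\end{equation*}
where the first factor is a top-degree Berezin integral in the $\hat e^\alpha$, the second in the $\hat e^s$, and the global sign $(-1)^{(n-k)/2}$ will come from reordering the volume element $\hat e^1\wedge\cdots\wedge\hat e^n$ into the $E$-part wedged with the $E^\perp$-part (this is where I would be careful about orientation conventions).

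Next I would evaluate each factor. For the $E^\perp$-factor, $\sigma[\exp(\tilde{\ddot R})]=\exp\bigl(\tfrac14\sum\langle(R^{E^\perp}-L_2)h_s,h_t\rangle\,\hat e^s\wedge\hat e^t\bigr)$ is the exponential of a $2$-form associated to the antisymmetric matrix $\tfrac12(R^{E^\perp}-L_2)$, so its Berezin integral is, by the standard Pfaffian identity $|\exp(\tfrac12\sum A_{st}\hat e^s\wedge\hat e^t)|^{(n-k)}={\rm Pf}(A)$, equal to ${\rm Pf}\bigl(\tfrac12(R^{E^\perp}-L_2)\bigr)$. However the claimed answer carries the extra factor ${\rm det}^{1/2}\bigl(\tfrac{\sinh((R^{E^\perp}-L_2)/2)}{(R^{E^\perp}-L_2)/2}\bigr)$; this must arise because $\tilde{\ddot R}$ is not literally $\tfrac14\sum\langle(R^{E^\perp}-L_2)h_s,h_t\rangle\hat c(h_s)\hat c(h_t)$ but the full Clifford-quadratic expression whose symbol, computed via the Duhamel/Mathai--Quillen formula exactly as in Proposition 3.13 of \cite{BGV} (Berline--Getzler--Vergne), produces precisely that $\widehat A$-type correction. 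So the $E^\perp$-factor equals ${\rm det}^{1/2}\bigl(\tfrac{\sinh((R^{E^\perp}-L_2)/2)}{(R^{E^\perp}-L_2)/2}\bigr){\rm Pf}\bigl(\tfrac{R^{E^\perp}-L_2}{2}\bigr)$. For the $E$-factor, the presence of $\hat c(f_1)\cdots\hat c(f_k)$ means that after applying $\sigma$ we are extracting the coefficient of $\hat e^1\wedge\cdots\wedge\hat e^k$ in $\hat e^1\wedge\cdots\wedge\hat e^k\,\sigma[\exp(\tilde{\dot R})]$, i.e. we are extracting the \emph{degree-zero} part of $\sigma[\exp(\tilde{\dot R})]$ rather than the top-degree part; by the same Mathai--Quillen computation this degree-zero (``scalar'') component equals ${\rm det}^{1/2}\bigl(\cosh(\tfrac{R^E-L_1}{2})\bigr)$.

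Finally I would assemble the three pieces: the reordering sign $(-1)^{(n-k)/2}$, the $E$-factor ${\rm det}^{1/2}(\cosh(\tfrac{R^E-L_1}{2}))$, and the $E^\perp$-factor, which is exactly the right-hand side of (3.40). The main obstacle I anticipate is \emph{not} the structural factorization but the bookkeeping in step two: one must identify precisely which Clifford-order terms in $\tilde{\dot R}$ and $\tilde{\ddot R}$ (as they actually appear inside the model operator $F_{(2)}$ and its exponential, via the Lichnerowicz formula of Theorem 2.6 and the Getzler rescaling) survive in the relevant component of the symbol, and then invoke the algebraic lemma of \cite{BGV} (Prop.~3.13) in the correct form --- once for a $\cosh$ (the $E$-block, extracting the scalar part) and once for the $\sinh/({\cdot})$ together with a Pfaffian (the $E^\perp$-block, extracting the top part). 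Keeping the normalizations ($\tfrac14$ versus $\tfrac12$, the roles of $L_1,L_2$, and the orientation of the frames $\{f_\alpha\},\{h_s\}$) consistent with Lemma 3.14 and equation (3.27) is the delicate point; everything else is a routine Berezin-integral computation.
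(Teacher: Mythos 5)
Your overall strategy coincides with the paper's: split $\exp(\tilde{\dot R}+\tilde{\ddot R})$ into commuting $E$- and $E^\perp$-factors, observe that the prefactor $\hat c(f_1)\cdots\hat c(f_k)$ converts top-degree extraction on the $E$-block into scalar-part extraction, and evaluate each block by the symbol-of-a-Clifford-exponential formula. The paper does this by hand via Chern roots, i.e.\ block-diagonalizing $R^E-L_1$ and $R^{E^\perp}-L_2$ and using $\exp(\tfrac{\theta_j}{2}\hat c\hat c)=\cos\tfrac{\theta_j}{2}-\sin\tfrac{\theta_j}{2}\,\hat c\hat c$, which is exactly the two-dimensional case of the BGV Prop.\ 3.13 you invoke. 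Two steps of your plan, however, are wrong as stated and would derail the execution.

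First, the sign. The frame is already ordered $f_1,\dots,f_k,h_1,\dots,h_{n-k}$, so no reordering of the volume element occurs and no sign of the kind you describe arises. The factor $(-1)^{\frac{n-k}{2}}$ is one $-1$ for each $2$-plane of $E^\perp$, coming from the Clifford exponential on that block: since $(\hat c(h_{2l-1})\hat c(h_{2l}))^2=-1$, the top-degree coefficient of $\sigma\bigl[\exp(\tfrac{\hat\theta_l}{2}\hat c(h_{2l-1})\hat c(h_{2l}))\bigr]$ is $-\sin\tfrac{\hat\theta_l}{2}$ rather than $+\tfrac{\hat\theta_l}{2}$; the paper's proof exhibits this as the product $\prod_l(-\sin\tfrac{\hat\theta_l}{2})$. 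Second, your explanation of the $\det^{1/2}(\sinh/x)$ factor is confused: $\tilde{\ddot R}$ \emph{is} literally $\tfrac14\sum\langle(R^{E^\perp}-L_2)h_s,h_t\rangle\hat c(h_s)\hat c(h_t)$ --- that is its definition. The reason your ``standard Pfaffian identity'' gives the wrong answer is that $\sigma$ is not an algebra homomorphism, so $\sigma[\exp(\tilde{\ddot R})]\neq\exp(\sigma[\tilde{\ddot R}])$; the symbol of the Clifford exponential must be computed by BGV Prop.\ 3.13 (equivalently the blockwise computation above), and it is exactly the per-block discrepancy between $\sin\tfrac{\hat\theta_l}{2}$ and $\tfrac{\hat\theta_l}{2}$ that produces $\det^{1/2}\bigl(\tfrac{\sinh((R^{E^\perp}-L_2)/2)}{(R^{E^\perp}-L_2)/2}\bigr)$ alongside the Pfaffian. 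With these two points repaired, your factorization and your evaluation of the $E$-block (scalar part $=\det^{1/2}\cosh(\tfrac{R^E-L_1}{2})=\prod_j\cos\tfrac{\theta_j}{2}$) reproduce the paper's argument.
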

 \begin{proof}
  In order to compute this differential form, we make use of  the Chern root algorithm (see [9]).
   Assume that $n={\rm dim}M$ and  $k={\rm dim}E$ are both even integers.
As in [5], we write
\begin{equation}
R^{E}-L_{1}=\begin{pmatrix}
  \begin{pmatrix}
0 & -\theta_{1} \\
\theta _{1} & 0
 \end{pmatrix}
 & &   & \text{{\huge{0}}}\\
& &  \ddots\\
 \text{{\huge{0}}}
 &  &  &
   \begin{pmatrix}
    0 & -\theta _{-\frac{k}{2}}  \\
    \theta _{-\frac{k}{2}}& 0
   \end{pmatrix}
 \end{pmatrix},
R^{E^\perp}-L_{2}=\begin{pmatrix}
  \begin{pmatrix}
0& -\hat{\theta} _{1} \\
\hat{\theta} _{1} & 0
 \end{pmatrix}
 & &   & \text{{\huge{0}}}\\
& &  \ddots\\
 \text{{\huge{0}}}
 &  &  &
   \begin{pmatrix}
    0 & -\hat{\theta} _{\frac{n-k}{2}}  \\
   \hat{\theta} _{\frac{n-k}{2}} & 0
   \end{pmatrix}
 \end{pmatrix}
\end{equation}

Then we obtain
 \begin{eqnarray}
\frac{1}{4}\sum_{ 1\leq \alpha,\beta\leq k} \langle (R^{E}-L_{1})
f_{\alpha},  f_{\beta}\rangle   \hat{c}( f_{\alpha})\hat{c}(f_{\beta})
&=&\frac{1}{2}\sum_{ 1\leq \alpha<\beta\leq k} \langle (R^{E}-L_{1})
f_{\alpha},  f_{\beta}\rangle   \hat{c}( f_{\alpha})\hat{c}(f_{\beta})\nonumber\\
&=&\frac{1}{2}\sum_{ 1\leq j\leq \frac{k}{2}}  \theta _{j}   \hat{c}( f_{2j-1})\hat{c}(f_{2j});
\end{eqnarray}

 \begin{eqnarray}
\frac{1}{4}\sum_{ 1\leq s,t\leq n-k} \langle (R^{E^\perp}-L_{2})h_{s}, h_{t} \rangle
   \hat{c}( h_{s})\hat{c}(h_{t})
   &=&\frac{1}{2}\sum_{ 1\leq s<t\leq n-k} \langle (R^{E^\perp}-L_{2})h_{s}, h_{t} \rangle
   \hat{c}( h_{s})\hat{c}(h_{t})\nonumber\\
&=&\frac{1}{2}\sum_{ 1\leq l\leq \frac{n-k}{2}} \hat{ \theta} _{l}   \hat{c}( h_{2l-1})\hat{c}(h_{2l}).
\end{eqnarray}

Then the  left hand side of (3.39) is
  \begin{eqnarray}
 &&\Big |\sigma\Big(\hat{c}( f_{1})\cdots \hat{c}( f_{k}){\rm exp}(\tilde{\dot{R}}+\tilde{\ddot{R}})\Big)\Big |^{(n)}\nonumber\\
&=& \Big |\sigma\Big(\hat{c}( f_{1})\cdots \hat{c}( f_{k})
\prod_{1\leq j\leq \frac{k}{2}}{\rm exp}(\frac{1}{2}  \theta _{j}
\hat{c}( f_{2j-1})\hat{c}(f_{2j}))
  \prod_{1\leq l\leq \frac{n-k}{2}} {\rm exp}( \frac{1}{2} \hat{ \theta} _{l} \hat{c}( h_{2l-1})\hat{c}(h_{2l}))\Big)\Big |^{(n)}\nonumber\\
&=& \Big |\sigma\Big(\hat{c}( f_{1})\cdots \hat{c}( f_{k})
\prod_{1\leq j\leq \frac{k}{2}}
\Big[{\rm cos }\frac{\theta_{j}}{2}
-{\rm sin }\frac{\theta_{j}}{2} \hat{c}( f_{2j-1})\hat{c}(f_{2j})\Big]
 \prod_{1\leq l\leq \frac{n-k}{2}}  \Big[{\rm cos }\frac{\hat{\theta_{l}}}{2}
-{\rm sin }\frac{\hat{\theta_{l}}}{2}\hat{c}( h_{2l-1})\hat{c}(h_{2l})\Big]\Big)\Big |^{(n)}\nonumber\\
&=&(-1)^{\frac{n-k}{2}}\prod_{1\leq j\leq \frac{k}{2}} {\rm cos }\frac{\theta_{j}}{2}  \prod_{1\leq l\leq \frac{n-k}{2}}
 {\rm sin }\frac{\hat{\theta}_{l}}{2}.
\end{eqnarray}

 Now we consider the right hand side of (3.39),

 \begin{equation}
\big(R^{E}-L_{1}\big)^{2p}=(-1)^{p}\begin{pmatrix}
  \begin{pmatrix}
\theta _{1}^{2p} & 0 \\
0 & \theta _{1}^{2p}
 \end{pmatrix}
 & &   & \text{{\huge{0}}}\\
& &  \ddots\\
 \text{{\huge{0}}}
 &  &  &
   \begin{pmatrix}
   \theta _{\frac{k}{2}}^{2p} & 0 \\
   0  & \theta _{\frac{k}{2}}^{2p}
   \end{pmatrix}
 \end{pmatrix},
\end{equation}
  Then
  \begin{equation}
 {\rm det}^{\frac{1}{2} } \Big({\rm cosh}\big( \frac{R^{E}-L_{1}}{2}\big) \Big)
 =\prod_{ j=1 }^{\frac{k}{2}}\Big( \sum_{ p=0}^{\infty} \big(\frac{\theta_{j}}{2}\big)^{2p}\frac{(-1)^{p}}{(2p)!}\Big)
  =\prod_{ j=1 }^{\frac{k}{2}}{\rm cosh}\frac{\sqrt{-1}\theta_{j}}{2}
  =\prod_{ j=1 }^{\frac{k}{2}} \frac{e^{\frac{\sqrt{-1}\theta_{j}}{2}}+e^{-\frac{\sqrt{-1}\theta_{j}}{2}}}{2}
  =\prod_{ j=1 }^{\frac{k}{2}}{\rm cos}\frac{\theta_{j}}{2}.
 \end{equation}
 Similarly, we have
   \begin{equation}
{\rm det}^{\frac{1}{2} } \Big( \frac{ {\rm sinh}(\frac{R^{E^\perp}-L_{2}}{2})}{(R^{E^\perp}-L_{2})/2}\Big)
  =\prod_{ j=1 }^{\frac{n-k}{2}}\frac{{\rm sin}\frac{\hat{\theta}_{j}}{2}}{\frac{\hat{\theta}_{j}}{2}}.
 \end{equation}
On the other hand,
   \begin{equation}
{\rm Pf} (\frac{R^{E^\perp}-L_{2}}{2})=T\Big({\rm exp}\big(\sum_{s<t}\langle\frac{ R^{E^\perp}-L_{2}}{2}h_{s},  h_{t}\rangle
h^{s}\wedge h^{t}\big)   \Big)
=T\Big({\rm exp}\big(\sum_{1\leq j\leq \frac{n-k}{2}}\frac{\hat{\theta}_{j}}{2} h^{2j-1}\wedge h^{2j}  \big) \Big)
=\prod_{ j=1 }^{\frac{n-k}{2}}\frac{\hat{\theta}_{j}}{2}.
 \end{equation}
 Combining these equations,  the  proof of lemma 3.19 is completed.
\end{proof}

In summary, we have proved Theorem 3.9.

\subsection{The local odd dimensional equivariant index Theorem for  sub-signature operators}

In this section, we give a proof of a  local odd dimensional  equivariant index theorem  for sub-signature operators.
Let $M$ be an odd dimensional oriented closed   Riemannian manifold. Using (2.19) in Section 2, we may define the
sub-signature operators $  \tilde{D}_{E}$. Let $\gamma$  be an orientation reversing involution  isometric acting on $M$.
 Set ${\rm d}\gamma$ preserves $E$ and $E^{\perp}$ and  preserves the orientation of $E$ ,  then $\tilde{\gamma}\hat{\tau}(E,g^{E})=\hat{\tau}(E,g^{E})\tilde{\gamma}$,
where $\tilde{\gamma}$ is the lift
on the exterior algebra bundle $\wedge T^{*}M$ of $\gamma$.
There exists a self-adjoint lift
$\widetilde{\gamma}:~\Gamma(M; \wedge(T^*M))\rightarrow \Gamma(M; \wedge(T^*M))$ of $\gamma$ satisfying
\begin{equation}
\widetilde{\gamma}^2=1;~~  \tilde{D}_{E}\widetilde{\gamma}=-\widetilde{\gamma}  \tilde{D}_{E}.
\end{equation}
 Now the $+1$ and $-1$
eigenspaces of $\widetilde{\gamma}$ give a splitting
\begin{equation}
\Gamma(M;\wedge(T^*M))\cong \Gamma^+(M; \wedge(T^*M)){\small \oplus} \Gamma^{-}(M; \wedge(T^*M)))
\end{equation}
then the  sub-signature operator interchanges $\Gamma^+(M; \wedge(T^*M))$ and $\Gamma^-(M; \wedge(T^*M))$,
and $\hat{c}(E,g^{E})$ preserves  $\Gamma^+(M; \wedge(T^*M))$ and $\Gamma^-(M; \wedge(T^*M))$.

Denotes by $\tilde{D}_{E}^{+}$ the restriction of
$\tilde{D}_{E}$ to $\Gamma^+(M, \wedge(T^*M))$.
We assume ${\rm dim}E=k$ is even, then $(\tilde{D}_{E})\hat{c}(E,g^{E})=\hat{c}(E,g^{E})(\tilde{D}_{E})$ and $\hat{c}(E,g^{E})$ is a
linear map from ${\rm ker}   \tilde{D}_{E}^{\pm}$ to ${\rm ker}  \tilde{D}_{E}^{\pm}$.

The purpose of this section is to compute
\begin{equation}
{\rm ind}_{\hat{c}(E,g^{E})}[(\tilde{D}^+_{E})]={\rm Tr}(\hat{c}(E,g^{E})|_{{\rm ker}   \tilde{D}_{E}^{+}})-{\rm Tr}(\hat{c}(E,g^{E})|_{{\rm ker}   \tilde{D}_{E}^{+}}).
\end{equation}
By the Mckean-Singer formular, we have

\begin{eqnarray}
{\rm ind}_{\hat{c}(E,g^{E})}(\tilde{D}^+_{E})
&=&\int_{M}
(\sqrt{-1})^{\frac{k}{2}}{\rm Tr}[\tilde{\gamma}\hat{c}(E,g^{E}) k_t(x,\gamma(x))]\nonumber dx\\
&=&\int_{M}(\sqrt{-1})^{\frac{k}{2}}{\rm Tr}[\tilde{\gamma}\hat{c}(E,g^{E}) K_{(F+\partial_t)^{-1}}(x,\gamma(x),t)]dx.
\end{eqnarray}

Let
\begin{equation}
R^{E}-L_{1}=\begin{pmatrix}
  \begin{pmatrix}
0 & -\theta_{1} \\
\theta _{1} & 0
 \end{pmatrix}
 & &   & \text{{\huge{0}}}\\
& &  \ddots\\
 \text{{\huge{0}}}
 &  &  &
   \begin{pmatrix}
    0 & -\theta _{-\frac{k}{2}}  \\
    \theta _{-\frac{k}{2}}& 0
   \end{pmatrix}
 \end{pmatrix},
R^{E^\bot}-L_{2}=\begin{pmatrix}
  \begin{pmatrix}
0 & -\hat{\theta}_{1} \\
\hat{\theta} _{1} & 0
 \end{pmatrix}
 & &   & \text{{\huge{0}}}\\
& &  \ddots\\
 \text{{\huge{0}}}
 &  &  &
   \begin{pmatrix}
    0 & -\hat{\theta} _{\frac{n-k-1}{2}}  \\
    \hat{\theta}_{\frac{n-k-1}{2}}& 0
   \end{pmatrix}\\
    &  &  &  & 0
 \end{pmatrix};
\end{equation}
and
   \begin{equation}
{\rm Pf} (\frac{R^{E^\perp}-L_{2}}{2})=\prod_{ j=1 }^{\frac{n-k-1}{2}}\frac{\hat{\theta}_{j}}{2}.
 \end{equation}
Similar to Theorem 3.9, we get the main Theorem in this section.

\begin{thm}( Local odd dimensional  equivariant   index Theorem  for sub-signature operators )

Let $x_0\in M^\gamma$, then
\begin{eqnarray}
&& \lim_{t\rightarrow 0}{\rm Tr}\left[\tilde{\gamma}(x_0)\hat{c}(E,g^{E})I_{(F+\partial_t)^{-1}}(x_0,t)\right]\nonumber\\
&=& -(\frac{1}{\sqrt{-1}})^{\frac{k}{2}-1} 2^{\frac{n}{2}}\Big\{\widehat{A}(R^{M^{\gamma}})\nu_{\phi}(R^{N^{\gamma}})
 i^{*}_{M^{\gamma}}\Big[{\rm det}^{\frac{1}{2} } \Big({\rm cosh}\big( \frac{R^{E}}{4\pi}-\frac{L_{1}}{2}\big) \Big)\nonumber\\
&&\times {\rm det}^{\frac{1}{2} } \Big( \frac{ {\rm sinh}
(\frac{R^{E^{\perp}}}{4\pi}-\frac{L_{2}}{2})}{\frac{R^{E^{\perp}}}{4\pi}-\frac{L_{2}}{2}}\Big)  {\rm Pf} \Big(\frac{R^{E^{\perp}}}{4\pi}-\frac{L_{2}}{2}\Big)\Big]\Big\}
^{(a,0)}(x_{0}).
\end{eqnarray}

\end{thm}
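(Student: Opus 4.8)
The plan is to run the odd-dimensional version of the argument that established Theorem 3.9, substituting the orientation-reversing involution $\gamma$ and the Clifford grading $\hat c(E,g^E)$ for the roles played there by $\phi$ and $\epsilon$. First I would record that, because $k=\dim E$ is even, $\hat c(E,g^E)$ commutes with $\tilde D_E$ and preserves $\ker\tilde D_E^\pm$, so the quantity $\mathrm{ind}_{\hat c(E,g^E)}(\tilde D_E^+)$ in (3.52) makes sense, and the McKean--Singer formula expresses it as the $t\to0$ limit of $\int_M(\sqrt{-1})^{k/2}\,\mathrm{Tr}[\tilde\gamma\,\hat c(E,g^E)\,K_{(F+\partial_t)^{-1}}(x,\gamma(x),t)]\,dx$, exactly as in (3.53). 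As in Section 3.1 this localizes to the fixed-point set $M^\gamma$: only a tubular neighbourhood of $M^\gamma$ contributes in the limit, and on each component $M_a^\gamma$ one uses the Volterra $\Psi$DO calculus (Definitions 2.4--2.8, Proposition 3.11) to reduce $I_{(F+\partial_t)^{-1}}(x_0,t)$ to its Getzler model via Lemmas 3.16 and 3.18.

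The second step is the rescaling/Getzler-asymptotics computation. By Proposition 3.14 the model operator of $F+\partial_t$ is $F_{(2)}+\partial_t$, whose Volterra inverse has the Mehler-type kernel given in Lemma 3.16(2); this produces the factor $(4\pi t)^{-a/2}\,\mathrm{det}^{1/2}\!\big(\tfrac{tR'/2}{\sinh(tR'/2)}\big)\,\mathrm{det}^{-1/2}(1-\phi^Ne^{-tR''})\,\exp\!\big(t(\tilde{\dot R}+\tilde{\ddot R})\big)$ after the normal-bundle integration. Lemma 3.18 then guarantees that $\sigma[I_Q(0,t)]^{((a,0),(a,b-l_2))}$ converges, as $t\to0^+$, to $I_{Q_{(-2)}}(0,1)^{((a,0),(a,b-l_2))}$ up to $O(t^{1/2})$. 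Plugging this together with the Berezin-trace formula (the odd analogue of (3.37), where $\tilde\gamma$ replaces $\tilde\phi$ and the extra $\hat c(E,g^E)=\hat c(f_1)\cdots\hat c(f_k)$ enters the Clifford word) and Lemma 3.12 into (3.53) collapses the supercommutator/Berezin bookkeeping to a single term, parallel to the first equality of (3.38).

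The third step is the purely algebraic identification of that term with the claimed characteristic form. Here I would invoke Lemma 3.19 verbatim --- its proof via the Chern-root algorithm is insensitive to the parity of $n$ --- together with the decomposition (3.54) of $R^E-L_1$ and $R^{E^\perp}-L_2$ into $2\times2$ blocks plus, in the odd case, one extra zero row and column in the $E^\perp$-block; the latter is exactly what forces the Pfaffian to be the product (3.55) over $j=1,\dots,\tfrac{n-k-1}{2}$. Combining $\mathrm{det}^{1/2}\mathrm{cosh}$, $\mathrm{det}^{1/2}\tfrac{\sinh}{\cdot}$ and this Pfaffian, and tracking the scalar prefactor $(-1)^{n/2}2^n(\tfrac12)^{(n-a)/2}(4\pi)^{-a/2}(\sqrt{-1})^{k/2}$ through the normalizations $\widehat A(R^{M^\gamma})$ and $\nu_\phi(R^{N^\gamma})$ of (3.15), one arrives at $-(\tfrac{1}{\sqrt{-1}})^{k/2-1}2^{n/2}$ times the bracketed expression in (3.56); the sign change and the shift of the power of $\sqrt{-1}$ (from $k/2$ to $k/2-1$, with an overall minus) relative to Theorem 3.9 come precisely from replacing $\epsilon$ by $\hat c(E,g^E)$, since $\hat c(E,g^E)$ contributes one extra factor of $\sqrt{-1}$ worth of normalization and reverses the grading sign in the supertrace.

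The main obstacle I anticipate is not the Getzler machinery --- that is word-for-word the even case --- but the careful accounting of signs and powers of $\sqrt{-1}$ when $\epsilon$ is traded for $\hat c(E,g^E)$ in the supertrace: one must verify that the Berezin-integral identity (3.37) still holds with $\tilde\gamma$ in place of $\tilde\phi$ and with the inserted $\hat c(f_1)\cdots\hat c(f_k)$, and that the isometry $\gamma$ being orientation-\emph{reversing} (so $N^\gamma$ is odd-dimensional and $\gamma'$ has $-1$ as an eigenvalue on the normal fibre direction complementary to the $2\times2$ rotation blocks) is consistent with the block normal forms (3.54); this is where the extra zero row/column and hence the $-(\tfrac{1}{\sqrt{-1}})^{k/2-1}$ prefactor are pinned down.
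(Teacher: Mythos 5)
Your proposal follows essentially the same route as the paper, which itself proves Theorem 3.21 only by declaring it "similar to Theorem 3.9" after recording the McKean--Singer identity (3.53), the block normal forms (3.54) with the extra zero row/column in the $E^{\perp}$-factor, and the resulting Pfaffian (3.55). Your fleshing-out of the Getzler/Volterra machinery, the reuse of Lemma 3.19 via the Chern-root algorithm, and the bookkeeping of the prefactor coming from trading $\epsilon$ for $\hat c(E,g^{E})$ and the orientation-reversing $\gamma$ is exactly the intended argument.
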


\section*{ Acknowledgements}
This work was supported by NSFC. 11271062 and NCET-13-0721.  The authors also thank
the referee for his (or her) careful reading and helpful comments.

\section*{References}


\begin{thebibliography}{00}
\bibitem{BV1}  N. Berline and M. Vergne, A computation of the equivariant index of the Dirac
operators. Bull. Soc. Math. Prance 113(1985) 305-345.
\bibitem{LYZ}  J. D. Lafferty, Y. L. Yu and W. P. Zhang, A direct geometric proof of Lefschetz
fixed point formulas, Trans. AMS. 329 (1992), 571-583.
\bibitem{PW}  R. Ponge and H. Wang, Noncommutative geometry, conformal geometry, and
the local equivariant index theorem, arXiv:1210.2032.
\bibitem{LM} K. Liu; X. Ma, On family rigidity theorems. I. Duke Math. J. 102(2000), no. 3,
451-474.
\bibitem{Wa2} Y. Wang, The Greiner¡¯s approach of heat kernel asymptotics, equivariant family
JLO characters and equivariant eta forms, arXiv:1304.7354. 1-27.
\bibitem{Wa3} Y. Wang,The Greiner's approach of heat kernel asymptotics and the variation formulas for the equivariant Ray-Singer metric
arXiv:1311.7527.
\bibitem{DF} D. Freed, Two index theorems in odd dimensions, Commu. Anal. Geom. 6(1998), 317-329.
\bibitem{Wa1} Y. Wang, Chern-Connes character for the invariant Dirac operator in odd dimensions.
Sci. China Ser. A 48 (2005), no. 8, 1124-1134.
\bibitem{LW} KF, Liu; Y. Wang, Rigidity Theorems on Odd Dimensional Manifolds. Pure and Applied Mathematics Quarterly.
5 (2009), 1139-1159.
\bibitem{Zh} W. Zhang, Sub-signature operators, $\eta$-invariants and a Riemann-Roch theorem for flat vector bundles.
Chin. Ann. Math. 25B, 7-36 (2004).
\bibitem{Zh1} W. Zhang, Sub-signature operator and its local index theorem. Chinese Sci. Bull. 41, 294-295 (1996). (in Chinese)
 \bibitem{MZ} X. Ma,  W.Zhang.: Eta-invariants, torsion forms and flat vector bundles. Math. Ann. 340: 569-624(2008)
 \bibitem{DZ} X. Dai,  W.Zhang.: Adiabatic limit, Bismut-Freed connection, and the real analytic torsion form.
J. reine angew. Math. 647, 87-113(2010).
\bibitem{BeGS} R. Beals, P. Greiner, N. Stanton, The heat equation on a CR manifold. J.
Differential Geom. 20, 343-387(1984).
\bibitem{ZJW}  J. W. Zhou, A geometric proof of the Lefschetz fixed-point theorem for signature
operators,(in chinese) Acta Math. Sinica 35 (1992), no. 2, 230-239.
\bibitem{Gr} P. Greiner, An asymptotic expansion for the heat equation. Arch. Rational Mech.
Anal. 41(1971), 163-218.

\bibitem{BGV} N. Berline; E. Getzler; M. Vergne, Heat kernels and Dirac operators. Springer-
Verlag, Berlin, 1992.

\bibitem{Po} R. Ponge, A new short proof of the local index formula and some of its applications. Comm. Math. Phys. 241(2003), 215-234.
\bibitem{JMB} J. M. Bismut, The Atiyah-Singer index theorem for families of Dirac operators: Two heat equation proofs.
Invent. math. 83(1986), 91-151.









\end{thebibliography}
\end{document}